\newtheorem{theo}{Theorem}[section]
\newtheorem{lem}[theo]{Lemma}
\newtheorem{prop}[theo]{Proposition}
\newtheorem{cor}[theo]{Corollary}
\newcommand{\la}{\langle}
\newcommand{\ra}{\rangle}
\newcommand{\R}{\mathbb{R}}
\newcommand{\N}{\mathbb{N}}
\newcommand{\eins}{{\mathbbm{1}}}
\renewcommand{\d}{\mathrm{d}}
\newcommand{\e}{\mathrm{e}}
\newcommand{\rZ}{{\cal Z}}
\newcommand{\rC}{{\cal C}}
\newcommand{\rP}{{\cal P}}
\newcommand{\rS}{{\cal S}}
\newcommand{\eyy}{= \\ &=&}
\newcommand{\pf}{ \longrightarrow }
\newcommand{\pfk}{ \rightarrow }
\newcommand{\D}{\mathrm{d}}
\newcommand{\oI}{{\textbf  I}}
\newcommand{\oA}{{\textbf  A}}
\newcommand{\oAs}{{\oA\!\!\!\;}^*}
\newcommand{\oTt}{{\textbf  T}(t)}
\newcommand{\oT}{{\textbf  T}}
\definecolor{rot}{rgb}{1.000,0.000,0.000}
\definecolor{gruen}{rgb}{0.000,1.000,0.000}
\begin{document}
\title{Memory equations as reduced Markov processes}
\author{Artur Stephan\thanks{Artur Stephan: Insitut f\"ur Mathematik, Humboldt-Universit\"at zu Berlin; Rudower Chaussee 25, D-12489 Berlin, Germany; email: stephan@math.hu-berlin.de}, Holger Stephan\thanks{Holger Stephan: Weierstra\ss{}-Insitut f\"ur Angewandte Analysis und Stochastik; Mohrenstra\ss{}e 39, D-10117 Berlin, Germany; email: holger.stephan@wias-berlin.de}}

\maketitle

\begin{abstract}
A large class of linear memory differential equations in one dimension,
where the evolution depends on the whole history, can be
equivalently described
as a projection of a Markov process living in a higher dimensional space. 
Starting with such a memory equation, we propose
an explicit construction of the corresponding Markov process.
From a physical point of view the Markov process can be understood as
a change of the type of some quasiparticles along one-way loops.
Typically, the arising Markov process
does not have the detailed balance property.
The method leads to a more realistic modeling of
memory equations. Moreover, it carries over the large number of investigation tools for
Markov processes to memory equations, like the calculation of the equilibrium
state, the asymptotic behavior and so on.
The method can be used for an approximative solution of
some degenerate memory equations like delay differential equations. 
\end{abstract}
\textbf {Keywords:} Markov generator, delay equation, Markov process without
detailed balance, modeling memory equations, non-autonomous,
functional differential equation,
exponential kernel, reservoirs, quasiparticles, linear
differential equations, Lagrange polynomial, Laplace transform, asymptotic
behavior, simplex integrals, integro-differential equation, ordinary
differential equations, rational functions.

~

\textbf {MSC:} 
39A06, 
34K06, 
60J27, 
00A71, 
34D05, 
44A10, 
26C15. 


\newpage

\section{Introduction}

Memory equations describe the time evolution of some quantity,
considering the whole prehistory of the evolution:
The past influences the future.

Markov processes, or more generally time evolutions
with the Markov property, describe the problem under the assumption that the
evolution can be predicted, knowing only the current state: 
The present influences the future. 

At first glance, by means of memory equations, it is possible to
investigate a wider class of problems, since
evolution equations with the Markov property can be regarded as degenerate memory
problems, where the dependence of the past is concentrated in one moment.

But from a philosophical point of view, it seems to be natural that a
complete description of a problem has to be a Markov one for the following
reason: The Markov property means that the solution operator is a semigroup,
i.e. it is time-invariant. Due to Noether's theorem, this invariant
corresponds to the conservation of some energy, the dual variable of time.
Thus, the Markov property is the typical property of a model,
where some energy is conserved.

Conversely, if the evolution is governed by a non-Markovian equation, it is
not complete, some energy is lost. This requires finding more degrees of
freedom unless the model is Markovian.
In other words, it is to be expected that a non-Markovian description
can be regarded as some part or restriction of a more-dimensional Markov process.

This theoretical thought can be confirmed in various practical situations:
\begin{itemize}
\item An arbitrary (nonlinear) dynamical system on a compact space $\rZ$
can be
  equivalently formulated as a linear deterministic Markov process on the
  space of Radon measures on $\rZ$ (see, e.g. \cite{stephan2})
via its Liouville equation.
\item A general linear evolution equation that is nonlocal in space and time,
including
  jumps and memory on some
  domain in $\R^n$, can be understood as a limit of a diffusion process (a
  special Markov process) on a complicated Riemannian manifold 
(see \cite{stephan3}).
\item The projection of a general Brownian motion (a special Markov
process in
  phase space) on the coordinate space is a diffusion process if the initial
  velocity is Maxwellian (see \cite{stephan1}).
\end{itemize}
Hence, the idea that a memory equation can be regarded as part of a higher
dimensional Markov process, does not seem to be very surprising.
Indeed, the main result in this paper is that we provide the construction of an
   easily analyzable Markov process for a more or less arbitrary given
memory
   kernel.

Let us briefly revise the basic facts in modeling and analyzing Memory equations and Markov processes.

\subsection{Memory Equations}

Memory equations (ME) are differential equations where the evolution depends
not only on the current state but also on the past.  
MEs are a special case of functional differential equations - an equation of
unknown functions and their derivatives with different argument values. The
mathematical theory of functional differential equations (or integro-differential
equations) is treated in \cite{HaleLunell, FDE}.  

From the viewpoint of modeling and analysis, MEs have attracted a lot
of attention during the last decades. 
For example, they arise in modeling flows trough fissured media,
\cite{HornungShawalter, Peszynska} or in modeling heat conduction with finite wave speeds \cite{GurtinPipkin}. 
We consider MEs of convolution type. Such equations arise also as
effective limits of homogenization problems, starting with the pioneering work
of L. Tartar \cite{Tartar}. 

The object of interest is a linear memory equation of the form
\begin{align}\label{GeneralME}
\dot u (t) = &-a u + K \ast u =  -a u + \int_0^t K(t-s) u(s) \d s,~
u(0)=u_0, 
\end{align}
where $u:[0,\infty[\rightarrow \R$ is a scalar state variable, $u_0\in\R_{\geq
  0}$ and $K:\R_{\geq 0}\rightarrow \R_{\geq 0}$ is a positive real
kernel. Please note, we focus on a scalar variable, but our considerations can
be generalized to systems as well as to non-autonomous linear PDEs (like
diffusion equations with time-dependent diffusion coefficients). 

Let us briefly explain the ME (\ref{GeneralME}).
In contrast to $\dot u = -a u$, where the decay is quite fast, in this
equation the decay is damped due to the influence of former states. The ME can
be interpreted as a reduction of the mass into unknown depots. Phenomenologically,
this can be  modeled by $a = a(t)$, which yields a non-autonomous
equation. Another way to think about (\ref{GeneralME}) is the
following. Introducing the function $A$ defined by $A' = -K$ and $A(0)=a$, we
get 
\begin{align*}
\dot u (t)  = -A(0)u - \int_0^t A'(t-s)u(s)\d s =  -\frac{\d }{\d t}  \int_0^t A(t-s) u(s) \d s.
\end{align*}
Integrating the above equation, we get
\begin{align*}
 u(t) = u(0) - \int_0^t A(t-s)u(s)\d s
\end{align*}
that can be regarded as a continuous analogue of the time-discrete scheme
\begin{align}\label{e172}
 u_n = u_0 - a_1 u_{n-1} - a_2 u_{n-2} -\dots.
\end{align}
Equivalently, using partial integration we get
\begin{align*}
\dot u (t)  = -A(t)u_0 - \int_0^t A(t-s)\dot u(s)\d s.
\end{align*}
This form is often considered (e.g. in \cite{Peszynska}). Subsequently, we use the form (\ref{GeneralME}).

For solving a ME, the memory described by $K(t)$ or $A(t)$ has to be known for any time $t\geq0$. This is often postulated, i.e. $K(t)$ is given by heuristic arguments.

A typical and simple example is $K_\alpha(t) = \alpha \e^{-\alpha t}$ for $\alpha>0$. Then $K_\alpha(t)\geq0$ and $\int_0^\infty K_\alpha(t)\d t = 1$. 

In this case, for $\alpha \pf +\infty$, the integral on the right-hand side of (\ref{GeneralME}) tends to $u(t)$ -- the ME becomes an ordinary differential equation. 

In the same sense, a sequence of some other integrals of convolution type can
tend to a delay differential equation (DDE), that means $K(t) =
\sum_{j}\alpha_j \delta(t-t_j)$ for large enough $t\geq0$. So, the kernel $K$
can be interpreted as a measure on the time line that can be approximated by
the ``simplest'' measures: convex combinations of $\delta$-measures. Note
that DDEs with the above kernel of the form 
\begin{align*}
 \dot u = -a u + \sum_{j}\alpha_j u(t-t_j),
\end{align*}
are solved with respect to an initial condition $\phi\in\mathrm{C}([-\max\{t_j\},
0])$. That means the
solution space is infinite dimensional.
On the other hand regarding the modeling viewpoint, it is difficult to derive
an initial value $\phi\in\mathrm{C}([0,T])$ for a DDE. Often the initial value
$\phi$ is assumed to be constant or a simple given function.  See 
e.g. \cite{Smith} for more details, where the analysis and applications
especially for modeling aftereffect phenomena are presented. 

The ME needs the initial value only for one fixed value, say $t=0$. But, if $t\geq \max\{t_j\}$, the DDE become a ME. This means, that
the beginning of the evolution is also modeled in the ME. In this sense, MEs
include many types of differential equations like ODEs and DDEs. We remark
that also from the modeling viewpoint it is more natural to treat kernels that are not
located at precise time values but are smeared.

Another important property is the asymptotic behavior. The ME is a
non-autonomous differential equation. The equilibrium cannot be calculated
setting $\dot u = 0$. Assuming $\int_0^\infty K(t)\d t = a$, any constant
solution $u(t) = u_0$ satisfies  
\begin{align*}
 \lim_{t\rightarrow \infty} \left(-a u(t) + \int_0^tK(s)u(t-s)\d s\right) = 0.
\end{align*}
Assuming $\int_0^\infty K(t)\d t \neq a$, there is no non-trivial solution that
makes the right-hand side zero, so that it is no equilibrium of the ME.

\subsection{Markov Processes}

There is a huge amount of literature on Markov Processes (MP) -- see, e.g. 
\cite{bobrowski,durrett,dynkin}. 
Here we introduce our notation.

Let $\rZ$ be a given state space, a compact topological space,
$\rC := \rC(\rZ)$ the Banach space of continuous functions on $\rZ$ and
$\rP := \rP(\rZ)$ the set of probability measures, 
i.e. the subset of Radon measures
$p$ on $\rZ$ with $p \geq 0$ and $p(\rZ)=1$.

A family $\oT(t)$, $t \geq 0$ of linear bounded operators in  $\rC$
is called a \textit{Markov semigroup} if it is a semigroup, i.e. if it satisfies
\begin{eqnarray*}
\oT(t_1+t_2) = \oT(t_1)\oT(t_2),~\oT(0) = \oI,~t_1,t_2 \geq 0~,
\end{eqnarray*}
it is positive $\oT(t) \geq 0$ in the cone sense of $\rC$ and $\eins$, the
constant 
function is a fix-point of $\oT(t)$ for all $t \geq 0$, $\oT(t)\eins=\eins$. We refer to \cite{1184, Nagel}.
The semigroup property is often called \textit{Markov property} and it is
equivalent to the 
assumption that the trajectory depends only on the present time point
and not on the past.

A linear operator $\oA$ on $\rC$ is called  \textit{Markov generator} if it is
the generator of a Markov semigroup, i.e.  if $g(t) = \oT(t) g_0$, where
$\oT(t)$ is a Markov semigroup. Then $g(t) = \oT(t) g_0$ is the solution of the equation
\begin{eqnarray}\label{e422}
\dot{g}(t) = \oA g(t),~g(0)=g_0
\end{eqnarray}
for an initial value $g_0$ from the domain of $\oA$.
This equation is called \textit{backward Chapman-Kolmogorov equation}.
A MP is the result of the action of the adjoint semigroup
$\oT^*(t)$ at a probability measure $p_0$, i.e. $p(t)=\oT^*(t)p_0$.
Any MP has at least one stationary probability measure
$\mu\in\rP$. It satisfies
$\oT^*(t) \mu = \mu$ for all $t \geq 0$. This is a consequence of the
Markov-Kakutani Theorem. The stationary probability measure $\mu$ is an element of the null-space of $\oAs$.

In this paper we consider 
continuous-time MPs on discrete state spaces.
$\rZ=\{z_0,...,z_N\}$ is a finite set of $N+1$ states.
In this case, we have $\rC=\R^{N+1}$ and $\rP$ is the simplex of 
probability vectors $\rP:=\textrm{Prob}(\{z_0,\dots,
z_N\}):=\{p\in \R^{N+1}: p_i\geq0, \sum_{i=0}^{N+1} p_i =1\}$
and a subset of $\R^{N+1}$, 
too.
A Markov semigroup is a real
matrix family $\oTt$ on $\R^{N+1}$ with positive entries
and row sum 1. Its adjoint is the transposed matrix family $\oT^*(t)$.

A MP is $p(t) = \oT^*(t) p_0$, where $p_0$ is some given 
probability vector. It satisfies the set of equations
\begin{eqnarray}\label{e421}
\dot p (t) = \oAs p(t),~p(0)=p_0,
\end{eqnarray}
where $\oAs$ is the adjoint of the corresponding Markov generator.
This equation is called \textit{forward Chapman-Kolmogorov equation}. In contrast to
equation (\ref{e422}) describing the evolution of  moment functions,
equation (\ref{e421}) describes the evolution of probability vectors. This
means that one component of the vector $p (t)$ can be understood as the
probability of the corresponding state, regardless of the 
probability of the other states.

It is well known that equation (\ref{e421}) has a unique solution $p(t) \in
\rP$ if and only if the off-diagonal
elements are nonnegative and the columns of $\oAs$ sum up to zero.
Thus, for $\oA = (A_{ij})$ we have $A_{ij} \geq 0$ for $i\not= j$ and 
$A_{ii} = - \sum_{i\not=j=1}^n A_{ij}$.

For a generic 
Markov matrix the stationary probability $\mu$ is unique and all trajectories
$\oT^*(t) p_0$ for 
any initial state $p_0$ converge to $\mu$. 
We only consider MPs with a unique stationary probability.

The eigenvalues of a Markov
generator have always strongly negative
real part, except one eigenvalue 0. The corresponding eigenvector is
$\eins$ for $\oA$ and $\mu$ for $\oAs$. If the eigenvalues
$\lambda_i$
of $\oAs$ are all different, every component of
the solution to (\ref{e421}), i.e. every component of $\oT^*(t) p_0$ is a 
linear combination of $\eins$ and exponential decaying functions
$\e^{-\lambda_i t}$.

A MP in $\R^{N+1}$ allows for different physical
interpretations. Apart from the 
canonical interpretations as a probability vector, it can be understood as
some concentration or amount of $N+1$ different materials. We will follow this
interpretation 
and will assume that this amount of materials is represented by 
particles of different types. These particles can transform into each other,
changing their type, which can be understood as a linear reaction. 
The entries of the Markov matrix $A_{ij}$ describe the rates of transforming
particles of type $z_j$ into particles of type $z_i$. 
Therefore, if we are only interested in the amount of material of one type, it
is enough to consider the corresponding component of the vector $p (t)$
only. 
The initial amount
of material is $p_0$.
Since $\oA$ is a Markov generator, positivity of the concentration and
the whole mass is conserved.

If a Markov generator $\oA = (A_{ij})$ and its stationary state
$\mu = (\mu_i)$ satisfy $A_{ij}\mu_j = A_{ji}\mu_i$ for any $i,j\in\{1,\dots,
n\}$, it is said 
that the corresponding MP has the detailed balance property.
It is equivalent to the case that the matrix $ (A_{ij})$ is symmetric
in the $L^2$-Hilbert space over $\mu$. Such a matrix has to have real
eigenvalues. We remark that the opposite is not true in general: A Markov
process without the detailed balance can have real eigenvalues, too. Moreover,
there can be no Hilbert space at all, where it is symmetric. 
From a physical point of view, the condition  $A_{ij}\mu_j = A_{ji}\mu_i$ 
means that any transition $z_i \Leftrightarrow z_j$ is in a local
equilibrium.
Thus, the detailed balance case is easier to analyze but it rarely appears in
general. The systems that we consider do not have the 
detailed balance property in principle.

\subsection{What our paper deals with}

In this paper, we connect the concepts of Markovian dynamics and 
non-Markovian dynamics, which seem to be different at first glance. 
Starting with a MP of a special form, we conclude a ME for the first 
coordinate. The ME is a scalar differential equation, but our considerations can also be applied to PDEs.
The resulting MP can be physically understood; the ME is governed 
by a kernel which is the sum of exponential functions. Then another path 
is taken: Starting with a ME with an exponential kernel, we find a MP where
its first component again yields the ME.
The other components can be understood as hidden degrees of freedom that
have to be included in a complete description of the problem.
This procedure is not unique and 
thus, it cannot be said that the hidden degrees of freedom are 
real physical variables. On the other hand, the construction of the 
MP out of the kernel is intuitive since the kernel is approximated by 
its moments. This method can be used to approximate a general positive 
kernel taking the enlargement of the MP into account. The simple case of 
two and three states is presented in chapter \ref{SimpleExample}. In 
this case, all solutions and kernels can be calculated by hand.  In 
chapter \ref{GeneralTheory} we consider the general case. The main 
theorems are stated here.

The method has many physical and mathematical advantages -- both for the theory of MPs and MEs. 
We want to highlight only two of them.
Firstly, the modeling of a kernel for ME is usually done by heuristic 
arguments. The method presented here can be used to model kernels in a 
more convenient manner, since the MP has an underlying physical meaning. 
Moreover, the modeling of the beginning of the process is also done. Secondly, the asymptotic behavior of a non-autonomous 
differential equation can immediately be calculated from the Markovian 
dynamics.

The paper concludes with chapter \ref{DDE}. Here we note the connection to delay
differential equations, where the kernel is highly degenerate. This is also
reflected in the setting of MP: The underlying Markov generator has a very
special form. We observe that the solution of the ME converges to the
equilibrium of the MP. The spectral functions of ME and MP also converge.

Summarizing, we have the following connection of modeling levels:
\begin{center}
 MP $\subset$ DDE $\subset$ ME $\subset$ MP'.
\end{center}
Here MP' is a Markov process with a larger number of degrees of freedom.

It is well known that a linear delay equation with delay $T$ in a state space $X$ can be regarded as an autonomous equation in a much
larger space $\mathrm C([-T,0], X)$, see e.g. \cite{Nagel}. There, the evolution of the delay
equation
is described by a semigroup of linear operators.
This approach is not our aim in this paper. In our setting, the space of the MP' is not so large typically.

\textbf {Notion:} In this paper, the Laplace transform is frequently used. Some properties are 
summarized in the appendix. MEs of convolution
type have the important property that the Laplace transform maps them into
multiplication operators. The Laplace transform $\mathcal{L}(u)$ of a real valued
function $t\mapsto u(t)$ is defined by $\mathcal{L}(u)(\lambda)=\hat u(\lambda) =
\int_0^\infty\e^{-\lambda t} u(t)\d t$. If there is no confusion, we omit the 'hat' on $\hat
u$ and just write $u$ or $u(\lambda)$.

Some analytical tools concerning Lagrange polynomials
and simplex integrals are presented in the appendix, too.


\section{Some simple Markov processes and memory equations}\label{SimpleExample}

Before starting the general theory, we firstly present the basic ideas 
focusing on simple low dimensional examples -- MPs with two and 
three states. Apart from the sake of simplicity nearly all phenomena 
of the general theory are eminent. 

\subsection{Two states}
We consider a MP on a state space of two abstract states
$\{z_0, z_1\}$, generated by the Markov generator
\begin{align}\label{MP2Dims}
\oA=\begin{pmatrix}
 -a & a \\
  b & -b
\end{pmatrix} , \mathrm{~~and~its~transpose}~~
\oAs=\begin{pmatrix}
 -a & b \\
  a & -b
\end{pmatrix}. 
\end{align}

\begin{minipage}[c]{12cm}
The matrix $\oAs$ describes the switching between the two states with given rates
$a \geq 0$, $b \geq 0$.
We can think of an amount of matter, represented by particles,
which can occur in two types. For some reason we are interested
only in particles of 
the first type.
\end{minipage}
\hfill
\begin{minipage}[t]{4.5cm}
\unitlength=2cm
\begin{picture}(2.5, 0.8)
\linethickness{0.2mm}
\put(0.25,0.25){\circle*{0.1}}         
\put(1.75,0.25){\circle*{0.1}}         
%
\put(0.5,0.2){\vector(1,0){1.0}}      
\put(1.5,0.3){\vector(-1,0){1.0}}       
\put(0.05,0.17){\makebox(0.0,0.0){$z_0$}}
\put(1.95,0.15){\makebox(0.0,0.0){$z_1$}}

\put(1,0.41){\makebox(0.0,0.0){$b$}}    
\put(1,0.08){\makebox(0.0,0.0){$a$}}    
\end{picture}
\end{minipage}
\vspace*{0.2em}

The equation describing the evolution of the vector $p=(u,v)$ 
reads $\dot p = \oAs p$ with $p(0)=p_0$. We assume 
that in the beginning the total mass is concentrated in the first 
variable, i.e. $p_0=(u_0, 0)$. In other words, all particles have type $z_0$.

The eigenvalues of $\oAs$ are $\{0, -(a+b)\}$.
The stationary solution is 
$\mu = \left(\frac{b}{a+b} u_0, \frac{a}{a+b} u_0 \right)$. It is unique
unless the non interesting case $a=b=0$. Any MP with two
states has the detailed balance property.

For $(u,v)$ the system reads as
\begin{equation}\label{e601}
 \left\{
 \begin{aligned}
  \dot u &= - a u + b v\\
 \dot v &= a u - bv.
 \end{aligned} \right.
\end{equation}

Using the Laplace transform and writing $u(\lambda) = \mathcal{L}(u(t))(\lambda)$ and $v(\lambda) = \mathcal{L}(v(t))(\lambda)$, we obtain a system of equations for $(u,v)$ in the form
\begin{equation*}
 \left\{
 \begin{aligned}
  (\lambda + a) u - u_0&= b v\\
 (\lambda + b) v &= a u.
 \end{aligned} \right.
\end{equation*}
This yields an equation for $u$ in the form 
\begin{align*}
(\lambda + a) u - u_0 = \frac{b a}{\lambda + b}u \Rightarrow \lambda u - u_0 = - a u + \frac{b a}{\lambda + b}u. 
\end{align*}
Using the inverse Laplace transform, we obtain a Memory Equation for $u$
\begin{align}\label{e602}
\dot u = -a u + a b \int_0^t \e^{-b(t-s)}u(s)\d s = -a\frac{\d}{\d t} \int_0^t \e^{-b(t-s)}u(s)\d s.
\end{align}
\begin{minipage}[b]{8cm}
The kernel $K(t)=b \e^{-bt}$ describes a dependence of the current state from
previous time moments. For $b \pf \infty$, $K(t)$ tends to $\delta(t)$ and the
equation becomes $\dot u = 0$. 
Thus, the right hand side of equation (\ref{e602}) consists of two terms,
the first one, $-a u$ describe an exponential decay, whereas the second one,
the
memory term describe an opposite effect: Particles that disappear, 
occur after a while. 
\end{minipage}
\hfill
\begin{minipage}[b]{7.0cm}
\unitlength=1cm
\begin{picture}(6,5)
\linethickness{0.2mm}
\put(0.0,0.0){\includegraphics[width=6cm]{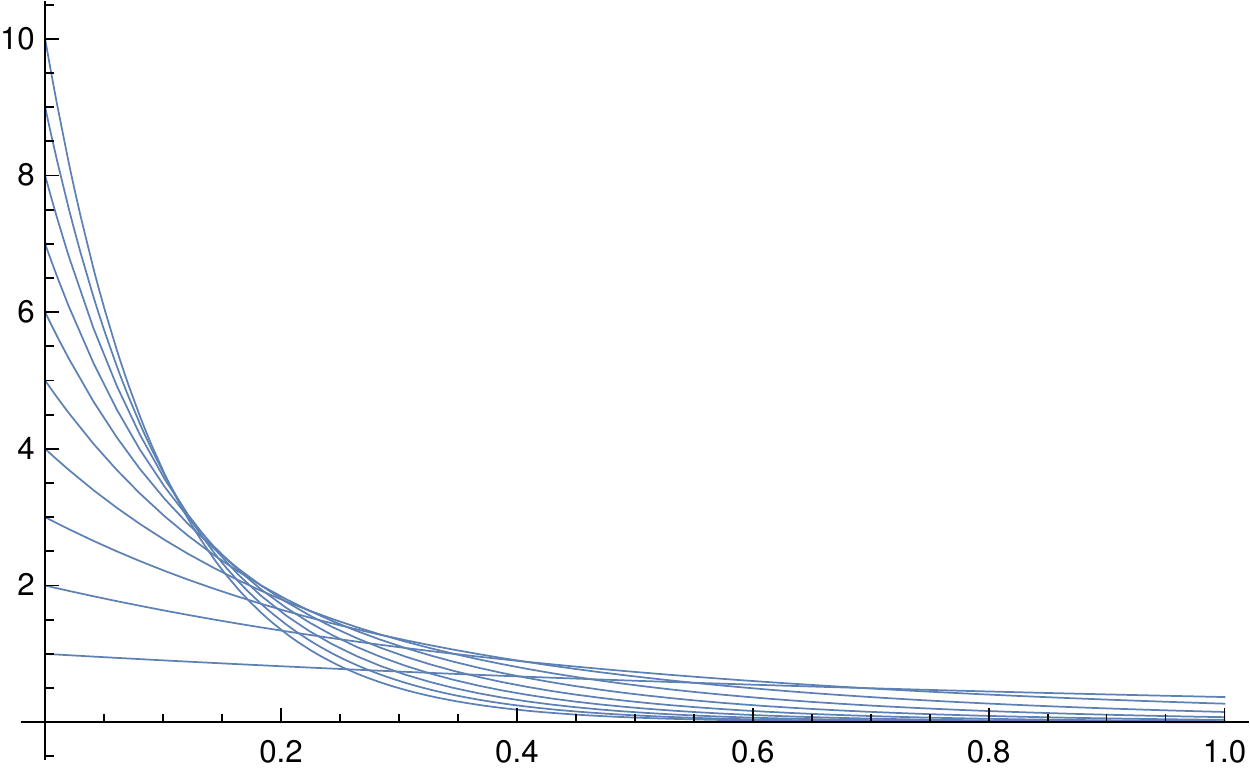}
}
\put(0.7,2.3){Kernel $b\e^{-bt}$ for $b=1, \dots, 10$}
\put(-0.5,3.2){\makebox(0.0,0.0){$K(t)$}}
\put(6.5,0.1){\makebox(0.0,0.0){$t$}}
\end{picture}
\end{minipage}

The time that passes between disappearing and
reappearing, decreases with $1/b$. 
In the end, not all matter disappears like in a pure equation
$\dot u = -a u$ but an equilibrium between 
disappearance and reappearance arises.

The same effect is caused by the MP, changing the type of the
particles. The particle changes the type from $z_0$ to $z_1$ with rate $a\geq0$,
it seems to disappear, if we look only at type $z_0$. After a while it re-changes to
type $z_1$ (it occurs) with rate $b\geq0$. This give the 
exponential time behavior $\e^{-bt}$ (corresponding to the memory
kernel $K(t)=b \e^{-bt}$), characteristic for MPs.

The equation (\ref{e602})
-- or equivalently the system (\ref{e601}) -- can be solved
explicitly. We obtain for the Laplace transform
\begin{eqnarray*}
u(\lambda) = {\lambda + b \over \lambda (\lambda  + a + b) } u_0 =
\left( {b \over a + b } {1 \over \lambda } + {a \over a + b }
 {1 \over \lambda +a+b} \right) u_0 
\end{eqnarray*}
and for the solution itself
\begin{eqnarray*}
u(t) = {b \over a + b } u_0 +  {a \over a + b } e^{-(a+b)t} u_0
\end{eqnarray*}
The solution tends to an equilibrium state $u_\infty = {b \over a + b } u_0$,
the first component of the stationary solution $\mu$.

It is not possible to calculate it from the memory equation (\ref{e602}), 
directly. Setting $\dot u =0$,  the equation 
\begin{eqnarray*}
\dot u = -a u + a b \int_0^t \e^{-b(t-s)}u(s)\d s = -a\frac{\d}{\d t} \int_0^t \e^{-b(t-s)}u(s)\d s.
\end{eqnarray*}
does not have any solution at all. Passing to the limit $t \pf \infty$ (and rewriting
at first
$ \int_0^t \e^{-b(t-s)}u(s)\d s =  \int_0^t \e^{-bs}u(t-s)\d s$)
we obtain
\begin{eqnarray*}
0 = -a u_\infty + a b \int_0^\infty \e^{-bs}u_\infty \d s~.
\end{eqnarray*}
Any constant $u_\infty$ solves this equation. This strange behavior of the solution of memory equations is typical
and can be illustrated in
a picture, showing the time behavior of both,
the solution of the MP and their first component -- the solution of the memory equation.
\newpage
\begin{minipage}[b]{7cm}
Investigating only the solution of the memory equation, it is not clear
why the \textcolor{green}{trajectory $u(t)$} stops in $u_\infty$.
Whereas looking from above, the \textcolor{rot}{trajectory $(u(t),v(t))$}
has to stop at the stationary state $\mu$, the intersection of the subspace
$u+v=1$ with the null space of $\oAs$.
\end{minipage}
\hfill
\begin{minipage}[b]{8cm}
\unitlength=1cm
\begin{picture}(5,5)
\linethickness{0.2mm}
\put(-0.3,0.0){\vector(1,0){6.3}}   
\put(0.0,-0.3){\vector(0,1){4.8}}   
\put(0,0){\line(1,2){2}}   
\put(0,4){\line(5,-4){5}}   
\put(1.,2.8){\makebox(0.0,0.0){$\mu$}} 
\put(5.5,-0.2){\makebox(0.0,0.0){$u$}} 
\put(-0.3,4){\makebox(0.0,0.0){$v$}} 
\put(5.,-0.3){\makebox(0.0,0.0){$u_0$}} 
\put(1.4,-0.3){\makebox(0.0,0.0){$u_\infty$}} 
\put(5,0){\circle*{0.2}}
\put(1.4,0){\circle*{0.2}}
\put(5.0,0.1){\textcolor{rot}{\vector(-5,4){3.5}}}
\put(5.0,0.1){\textcolor{green}{\vector(-1,0){3.5}}}
\end{picture}
\end{minipage}


\subsection{Three states}

A general memory kernel has not be concentrated in $t=0$. It can describe a
transfer of mass from a very earlier time. It seems that this situation can be modeled
by transitions between many quasiparticles before it appears at its starting type
again.

\begin{minipage}[b]{11cm}
To understand the action of such a transition loop, we investigate in
detail a special case of three states, namely the transformation
of a fixed particle (type $z_0$) in
two different quasiparticles. One of them (type $z_1$) can be transformed back
into type $z_0$ immediately, whereas the other (type $z_2$)  can be transformed back
into type $z_0$ only by two steps, changing at first to type $z_1$. This process is
illustrated in the picture.
\end{minipage}
\hfill
\begin{minipage}[b]{5cm}
\unitlength=2cm
\begin{picture}(2.5,1.8)
\linethickness{0.2mm}
\put(0.25,0.25){\circle*{0.1}}         
\put(1.75,0.25){\circle*{0.1}}         
\put(1.00,1.25){\circle*{0.1}}         
%
\put(1.5,0.3){\vector(-1,0){1.}}       
\put(1.1,1.05){\vector(3,-4){0.5}}     
\put(0.41,0.40){\vector(3,4){0.5}}     
\put(0.85,1.18){\vector(-3,-4){0.55}}  
\put(0.05,0.17){\makebox(0.0,0.0){$z_1$}}
\put(1,1.45){\makebox(0.0,0.0){$z_0$}}
\put(1.95,0.15){\makebox(0.0,0.0){$z_2$}}
\put(0.45,0.9){\makebox(0.0,0.0){$a_1$}}  
\put(0.8,0.65){\makebox(0.0,0.0){$b_1$}}  
\put(1.25,0.65){\makebox(0.0,0.0){$a_2$}} 
\put(1,0.41){\makebox(0.0,0.0){$b_2$}}    
\end{picture}
\end{minipage}

\subsubsection{From Markov to Memory}
The simple MP on a state space of three abstract states $\{z_0,z_1,z_2\}$ is described by the Markov generator
\begin{align}\label{MP3Dims}
\oA=\begin{pmatrix}
 -a_1-a_2 & a_1 & a_2\\
  b_1 & -b_1 & 0\\
  0 & b_2 & -b_2
\end{pmatrix} ,~~
\oAs=\begin{pmatrix}
 -a_1 - a_2 & b_1 & 0\\
  a_1 & -b_1 & b_2\\
  a_2 & 0 & -b_2
\end{pmatrix} 
\end{align}
with $a_1,a_2,b_1,b_2 \geq 0$. 
The equation, generating the 
MP is 
\begin{align}\label{MarkovSystem3Reservoirs}
 \dot p (t) = \oAs p(t), ~~p(0)=p_0. 
\end{align}
Note, this is a Markov generator depending on four rates. A general Markov
generator on $\R^3$ depends on six rates.

The stationary state $\mu$ is the solution to $\oAs\mu = 0$ and can be
calculated easily as 
\begin{eqnarray*}
\mu =
\left(1+ \frac{a_1+a_2}{b_1}+ \frac{a_2}{b_2}\right)^{-1}
\left(1, \frac{a_1+a_2}{b_1}, \frac{a_2}{b_2}\right)u_0
=
{(b_1 b_2 , a_1 b_2 + a_2 b_2 , a_2 b_1 ) \over 
b_1 b_2 + a_1 b_2 + a_2 b_2 + a_2 b_1  } u_0.
\end{eqnarray*}

The eigenvalues (they have always non-positive real part) of the matrix are 
$\lambda_0 = 0$ and
\begin{eqnarray*}
\lambda_{1,2}=
-\frac 1 2 \left(a_1+a_2+b_1+b_2 \pm 
\sqrt{(a_1+a_2+b_1+b_2)^2-4
   (a_1 b_2+a_2 b_1+a_2 b_2+b_1
   b_2)}\right).
\end{eqnarray*}
Depending on $a_1,a_2,b_1,b_2$ the eigenvalues can be real 
(e.g. $\lambda_1 = -5$, $\lambda_2 = -11$ for $a_1=2,a_2=5,b_1=8,b_2=1$) or complex (e.g. for 
$\lambda_{1,2} = -9 \pm 2i$ for $a_1=2,a_2=5,b_1=8,b_2=3$).
(By the way, these are suitable values for an explicite solution with rational terms,
only.)

This MP has the detailed balance property, if $b_1 b_2 a_2=0$,
which is not interesting, since the coupling chain is broken.
Roughly speaking, the detailed balance property means that for any loop
in one direction there is a loop backwards with the same product of the rates.
But this is not the case in our model.
Thus, the MP under consideration violate 
the detailed balance property, generically.

The stationary state is unique if and only if the real parts of $\lambda_{1,2}$ are
strongly negative. Or, equivalently,
$b_1 b_2 + a_1 b_2 + a_2 b_2 + a_2 b_1 =0$. Since the $a_i,b_i$ are non negative,
this is a non interesting case that we exclude.
Then, the  stationary state is the equilibrium state for any
initial value.
Note, that nevertheless some of the $a_i,b_i$ might be zero.

As in the case of two states, we are interested only in the state $z_0$ of the system and ask for an evolution equation of this state. To do this, we introduce the notion 
$p=(u, v_1, v_2)$ and look for the evolution of $u$ with
an initial state $p_0=(u_0, 0, 0)$. This is naturally, since the states $z_1$ and
$z_2$ are unknown, and there is no reason to assume that particles with $z_1$, $z_2$ exist in the beginning. 

Equation (\ref{MarkovSystem3Reservoirs}) is now equivalent to the system
\begin{equation*}
 \left\{
\begin{array}{rcrrr}
\dot{u}(t)   &=& - (a_1 + a_2) u(t) & + b_1 v_1(t) &  \\
\dot{v}_1(t) &=& a_1 u(t) &- b_1 v_1(t)  &+ b_2 v_2(t)  \\
\dot{v}_2(t) &=& a_2 u(t)&   &- b_2 v_2(t)
\end{array}
\right. .
\end{equation*}
Passing to the Laplace transform, we obtain with  
$u = \mathcal L u$, $v_i = \mathcal L v_i$ the system
\begin{equation*}
\left\{
\begin{array}{rcrrrr}
\lambda{u}   &=& - (a_1 + a_2) u & + b_1 v_1 &  &+ u_0\\
\lambda {v}_1 &=& a_1 u &- b_1 v_1  &+ b_2 v_2 & \\
\lambda v_2 &=& a_2 u&   &- b_2 v_2&
\end{array}
\right. .
\end{equation*}
or equivalently, introducing $a=a_1+ a_2$, we get
\begin{equation*}
\left\{
\begin{aligned}
(\lambda + a) u - u_0&=&  b_1 v_1 \\
(\lambda + b_1) v_1 &=& a_1 u + b_2 v_2\\
(\lambda + b_2) v_2 &=& a_2 u
\end{aligned}
\right. .
\end{equation*}
Here, $v_1$ and $v_2$ can be eliminated as
\begin{align*}
v_2 = {a_2 \over \lambda+b_2} u~,~~
v_1 = 
{a_1 \over \lambda+b_1} u + {b_2 \over \lambda+b_1} v_2
= 
{a_1 \over \lambda+b_1} u + {a_2 b_2 \over (\lambda+b_1)(\lambda+b_2) } u .
\end{align*}
We conclude the following equation for $u$
\begin{align} \label{Markov3Reservoir1Equation}
 \lambda u -u_0 =\left(-a  + a_1\frac{b_1}{\lambda+b_1} + a_2\frac{b_1}{\lambda+b_1} \frac{b_2}{\lambda+b_2}\right)u. 
\end{align}
This is an equation for the first state, only. 
It can be solved explicitly with respect to $u$.
But, at this moment, this is not our aim. We are looking for an equation 
for $u$.
We write
\begin{align*}
\frac{b_1}{\lambda+b_1} \frac{b_2}{\lambda+b_2} = \frac{b_1b_2}{b_2-b_1}\left( \frac{1}{\lambda+b_1}- \frac{1}{\lambda+b_2}\right),
\end{align*}
and, after transforming inverse, we get an equation
for the function $u(t)$, namely
\begin{align}\label{ME3Reservoirs}
\dot u 
&= -a u + a_1b_1 \int_0^t \e^{-b_1s}u(t-s)\d s + a_2\frac{b_1b_2}{b_2 - b_1} \int_0^t (\e^{-b_1s}-\e^{-b_2s})u(t-s)\d s\\
&= -a u + (K\ast u) (t)\nonumber, 
\end{align}
where 
\begin{align}\label{KernelME3}
K(t)&=
b_1 a_1 \e^{-b_1t}+a_2\frac{b_1b_2}{b_2-b_1}\left(\e^{-b_1t}-\e^{-b_2t}\right)
=\\
&= \label{e612}
\left(b_1 a_1 + \frac{b_1b_2a_2}{b_2-b_1}\right)
\e^{-b_1t}-\frac{b_1b_2a_2}{b_2-b_1}\e^{-b_2t} .
\end{align}
So, we obtain a memory equation with the kernel $K$.
This equation describe the evolution of the first state of our
physical system, depending on the whole past from $0$ to time $t$.
Obviously, this dependence is a result of the projection, 
since nothing else had be done. 
Thus, $u(t)$ is the solution of two equivalent equations, a 
memory equation and a component of a Markov system.

~

\begin{minipage}[b]{8cm}
The kernel $K(t)=a_1 K_1(t)+a_2 K_2(t)$ is the sum of two parts
\begin{eqnarray*}
K_1(t) &=& 
b_1 \e^{-b_1t}\\
K_2(t) &=& \frac{b_1b_2}{b_2-b_1}\left(\e^{-b_1t}-\e^{-b_2t}\right)
\end{eqnarray*}
each of them is obviously positive . 
If we denote $m_i=\int_0^\infty t K_i(t) \d t$ the mean
time of a kernel, we have
\begin{eqnarray*}
m_1 = {1 \over b_1},~
m_2 = {1 \over b_1}+{1 \over b_2}.
\end{eqnarray*}
\end{minipage}
\hfill
\begin{minipage}[b]{8cm}
\unitlength=1cm
\begin{picture}(6,5)
\linethickness{0.2mm}
\put(0.0,0.0){\includegraphics[width=7cm]
{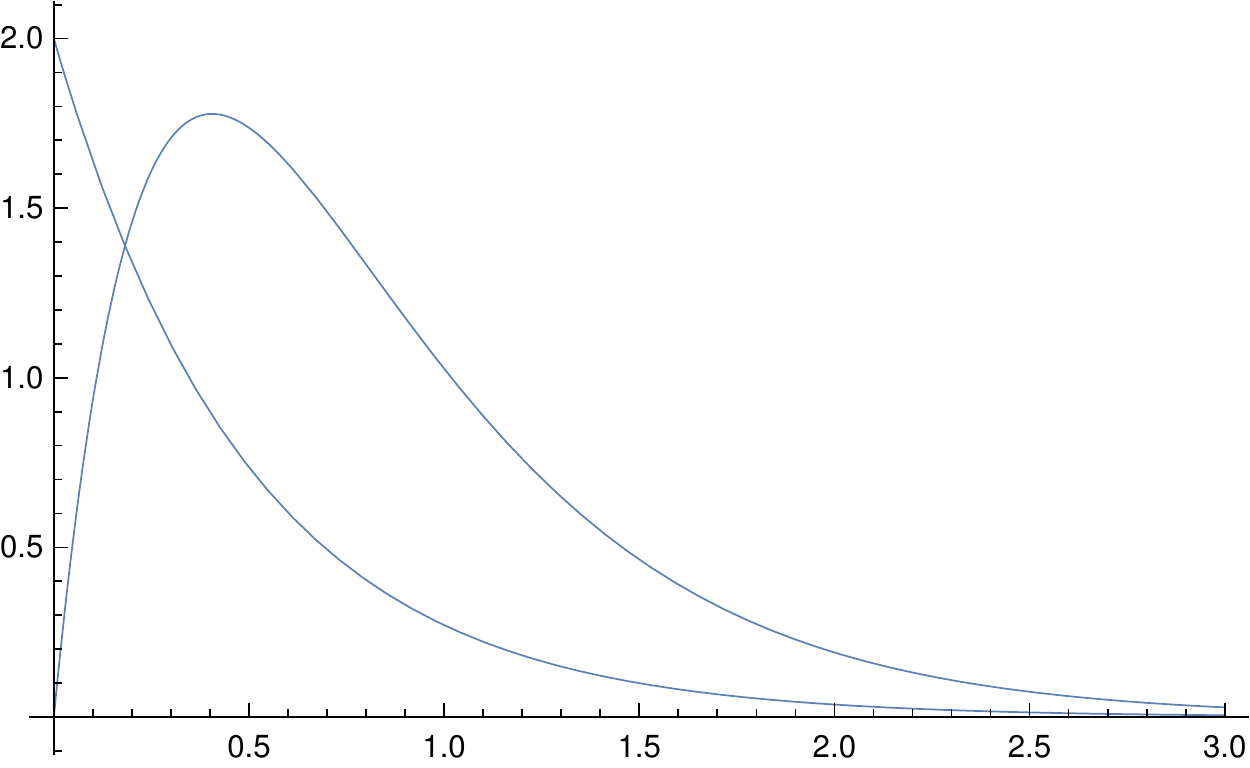}}
\put(1.0, 1.0){$K_1$}
\put(3.0, 2.0){$2K_2$}
\put(3.0, 3.5){$b_1=2$, $b_2=3$}
\put(-0.5,3.6){\makebox(0.0,0.0){$K(t)$}}
\put(6.7, 0.6){\makebox(0.0,0.0){$t$}}
\end{picture}
\end{minipage}

~

The first kernel $K_1$ describes a memory effect with 
small mean time and correspond to a small loop 
$z_0 \stackrel{a_1}{\pf} z_1 \stackrel{b_1}{\pf} z_0$
in the MP. The other kernel $K_1$ describes a memory effect with 
longer mean time and correspond to a longer loop 
$z_0 \stackrel{a_2}{\pf} z_2 \stackrel{b_2}{\pf} z_1 \stackrel{b_1}{\pf} z_0$. The relative coefficients $a_i/a$ form a convex combination. The
transitions $z_0 \stackrel{a_i}{\pf} z_i$ split the whole number of
particles in parts according to the loops.

Let us summarize some properties of the kernel $K(t)$.
\begin{itemize}
\item $K(t)$ is the sum of exponential decaying functions, where the exponents
  are diagonal elements of $\oA$.
\item The arising memory equation is (\ref{ME3Reservoirs}) with
$a = \sum_i^{N} a_i$ or, equivalently,  $k(\lambda = 0)=a$
\item $K(t) \geq 0$ iff $k(\lambda) \geq 0$, since $a_i,b_i \geq 0$.
\end{itemize}

Equation (\ref{Markov3Reservoir1Equation}) can be solved
explicitely:
\begin{eqnarray*}
u \left( \lambda + a - {a_1 b_1\over \lambda+b_1} -
 {a_2 b_1 b_2 \over (\lambda+b_1)(\lambda+b_2) } \right) &=& u_0 \\
\Rightarrow \lambda u \left( {\lambda ^2 + \lambda (a+b_1+b_2) + 
a_2 b_1 + a_1 b_2 + a_2 b_2 + b_1 b_2 
\over (\lambda+b_1)(\lambda+b_2) } \right) &=& u_0 \\
\Rightarrow u = { 1 \over \lambda}
{ (\lambda+b_1)(\lambda+b_2) \over \lambda ^2 + \lambda (a+b_1+b_2) + 
a_2 b_1 + a_1 b_2 + a_2 b_2 + b_1 b_2 } && u_0.
\end{eqnarray*}
To get an explicite term for $u(t)$ we have to factorize the
denominator what leads -- of course -- to the same time behavior
as determined by the eigenvalues for the MP.

We compute the asymptotic behavior of the solution $u(t)$,
using the asymptotic properties of the Laplace transform.
We obtain for the equilibrium state
\begin{eqnarray*}
u_\infty = \lim\limits_{\lambda \pfk 0} \lambda u =
{  b_1 b_2 \over 
a_2 b_1 + a_1 b_2 + a_2 b_2 + b_1 b_2 } u_0.
\end{eqnarray*}
For the other components we get in the same manner
\begin{eqnarray*}
v_1(t=\infty) &=& 
{a_1 b_2 + a_2 b_2  \over 
a_2 b_1 + a_1 b_2 + a_2 b_2 + b_1 b_2 } u_0,\\
v_2(t=\infty) &=& 
{  a_2 b_1 \over 
a_2 b_1 + a_1 b_2 + a_2 b_2 + b_1 b_2 } u_0.
\end{eqnarray*}
These are the parts of the initial mass that remain in the states $z_1$ and $z_2$.

\subsubsection{From Memory to Markov}

Now, we go the opposite direction and start with a kernel that is
the sum of two exponential decaying terms, i.e.
\begin{eqnarray}\label{e661}
K(t) = c_1 \e^{-\alpha_1 t}  + c_2 \e^{-\alpha_2 t} 
\end{eqnarray}
with some real coefficients $c_1,c_2$. We assume $c_i\not=0$, otherwise
we are in the case of 2 states.
For definiteness, we assume $\alpha_1 > \alpha_2 > 0$. 
The $\alpha_i$ has to be strongly positive, otherwise we have no 
decreasing of the time dependence of the past.

This kernel has to be written in the form (\ref{KernelME3}) with positive
coefficients. We have
\begin{eqnarray*}
 K(t) &=& c_1 \e^{-\alpha_1 t}  + c_2 \e^{-\alpha_2 t} 
\eyy
(c_1+c_2) \e^{-\alpha_1 t}  + { c_2 (\alpha_1 - \alpha_2)}
{ \e^{-\alpha_2 t}  - \e^{-\alpha_1 t} \over  \alpha_1 - \alpha_2}.
\end{eqnarray*}
Thus, we have to demand $c_1 + c_2 \geq 0$ and $c_2 \geq 0$.
Both are consequences of the positivity of $K(t)$, setting $t=0$ and
$t \pf \infty$.

Now, the MP is easily constructed. We set
\begin{eqnarray*}
b_1 &=& \alpha_1 \\
b_2 &=& \alpha_2 \\
a_2 &=& {c_2(\alpha_1 - \alpha_2) \over \alpha_1 \alpha_2 }  \\
a_1 &=& {c_1+c_2 \over \alpha_1}~.
\end{eqnarray*}
The entries of the matrix $b_1,b_2,a_2$ are strongly positive, $a_1$
is non negative. This guarantees the uniqueness of the stationary solution.
Moreover, it violates the detailed balance property.

The existence of a positive equilibrium is fulfilled, we have
the
equation
\begin{eqnarray*}
\dot{u} = - a u + \int_0^t K(t-s) u(s) \d s,~u(0)=u_0,
\end{eqnarray*}
with the property of consistency $k(0)=a= a_1+a_2 = \frac{\alpha_1 c_2 + \alpha_2 c_1}{\alpha_1\alpha_2}$. Summarizing, we get the following result:

\begin{prop}~
The first component of the MP generated by $\oAs$ given by (\ref{MP3Dims})
is the solution to the ME (\ref{ME3Reservoirs}).

For a ME $\dot u= -a u + (K\ast u)$ with a kernel (\ref{e661}) 
with parameters $c_1,c_2,\alpha_1,\alpha_2$ satisfying
$\alpha_1 > \alpha_2 > 0$, $c_1 + c_2 \geq 0$ and $c_2 \geq 0$, it can be
constructed a three dimensional MP, where the first component 
coincides with the solution to the ME.
\end{prop}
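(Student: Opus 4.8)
The plan is to prove the two assertions separately, making the converse rest on the forward direction. Both directions exploit the same structural fact already used in the excerpt: the Laplace transform sends the convolution in the ME to a multiplication and the linear Markov system to an algebraic system that is solvable by back-substitution because $\oAs$ in (\ref{MP3Dims}) is nearly triangular.

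For the first assertion I would start from $\dot p = \oAs p$ with $p = (u, v_1, v_2)$ and $p_0 = (u_0, 0, 0)$, apply the Laplace transform, and read off the algebraic system. Since the last equation couples $v_2$ only to $u$, I eliminate $v_2 = \frac{a_2}{\lambda+b_2}u$, then express $v_1$ through $u$, arriving at the scalar relation (\ref{Markov3Reservoir1Equation}). The rational function multiplying $u$ there is a combination of $\frac{b_1}{\lambda+b_1}$ and $\frac{b_1}{\lambda+b_1}\frac{b_2}{\lambda+b_2}$; these are the Laplace transforms of $b_1\e^{-b_1 t}$ and, after partial fractions, of $\frac{b_1 b_2}{b_2-b_1}(\e^{-b_1 t}-\e^{-b_2 t})$. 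Inverting the transform reproduces exactly the kernel (\ref{KernelME3}) and hence the ME (\ref{ME3Reservoirs}). This is a direct computation; the only bookkeeping point is to require $b_1 \neq b_2$ so that the partial-fraction step is legitimate.

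For the converse I would reverse this calculation. Given $K(t) = c_1\e^{-\alpha_1 t} + c_2\e^{-\alpha_2 t}$ with $\alpha_1 > \alpha_2 > 0$, I rewrite it in the basis $\{\e^{-\alpha_1 t},\ \frac{1}{\alpha_1-\alpha_2}(\e^{-\alpha_2 t}-\e^{-\alpha_1 t})\}$, matching the template (\ref{KernelME3}) under the identification $b_1 = \alpha_1$, $b_2 = \alpha_2$. Comparing coefficients yields the explicit values $a_2 = \frac{c_2(\alpha_1-\alpha_2)}{\alpha_1\alpha_2}$ and $a_1 = \frac{c_1+c_2}{\alpha_1}$. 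Feeding these into (\ref{MP3Dims}) defines a candidate generator, and by the first assertion its first component solves the ME with precisely the prescribed kernel. I would then check that (\ref{MP3Dims}) is a genuine Markov generator: the off-diagonal entries are nonnegative whenever $a_i, b_i \geq 0$, and each column of $\oAs$ sums to zero by construction.

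The step requiring care—and the main obstacle—is confirming that positivity of the kernel forces the sign conditions that make all four parameters nonnegative. Here $b_1, b_2 > 0$ and the denominator $b_2 - b_1 = \alpha_2 - \alpha_1 \neq 0$ are immediate from $\alpha_1 > \alpha_2 > 0$, so $a_2$ and $a_1$ are well defined; nonnegativity of $a_2$ needs $c_2 \geq 0$ and of $a_1$ needs $c_1 + c_2 \geq 0$. These two inequalities are exactly the hypotheses, and they are the natural ones: evaluating $K(0) = c_1 + c_2 \geq 0$ and letting $t \to \infty$, where the slower mode $\e^{-\alpha_2 t}$ dominates and forces $c_2 \geq 0$, shows they are also necessary for $K \geq 0$. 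Once these are in hand the two halves of the proposition fit together and the strict positivity of $b_1, b_2, a_2$ gives the irreducible loop guaranteeing a unique stationary state.
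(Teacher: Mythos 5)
Your proposal is correct and follows essentially the same route as the paper: Laplace transform and back-substitution to get (\ref{Markov3Reservoir1Equation}), partial fractions to invert to the kernel (\ref{KernelME3}), and for the converse the identical coefficient matching $b_1=\alpha_1$, $b_2=\alpha_2$, $a_1=\frac{c_1+c_2}{\alpha_1}$, $a_2=\frac{c_2(\alpha_1-\alpha_2)}{\alpha_1\alpha_2}$, with the sign conditions obtained from $K(0)\geq 0$ and the $t\to\infty$ behavior exactly as in the text. No gaps worth noting.
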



\section{General Memory Equations as Markov processes}\label{GeneralTheory}

In this chapter, we generalize the ideas from the last chapter to an 
arbitrary finite dimensional MP.
Firstly, we show that the first coordinate of a special MP,
consisting of different transformation loops, satisfies 
a suitable memory equation with a more or less general kernel. 
Then, we go the opposite direction: We show that a ME with a kernel 
of a special form yields the MP we started with. The construction of the MP
is explicitly.

\subsection{From Markov to Memory}

We consider a MP of $N+1$ abstract states 
$\{z_0, z_1, \dots, z_N\}$ of the following form

\begin{align}\label{MarkovGeneratorGeneralCase}
\oAs=\begin{pmatrix}
 -a & b_1 & 0 & 0 & \dots &0\\
  a_1 & -b_1 & b_2 & 0 &\dots & 0 \\
  a_2 & 0 & -b_2 & b_3 &\dots & 0 \\
  a_3 & 0 &  0 & -b_3 &\dots & \dots \\
  \dots & \dots &  \dots & \dots &\dots & \dots \\
  a_{N-1} & 0 &  0 & 0 & -b_{N-1} & b_N \\
  a_N & 0 &  0 & 0 & 0 & -b_N 
\end{pmatrix}, 
\end{align}
where $a_j\geq0$ and $b_j > 0$ for $j=1, \dots, N$ 
are non negative rates and we set $a := \sum_{j=1}^N a_j$.
The condition $b_j > 0$ is reasonable, since otherwise the loop is broken
somewhere.

The process $p(t)$ is generated by the equation $\dot p = \oAs p$.
We set $p=(u, v_1, \dots ,v_N)$ and understand this quantity as the
concentration of some particles.
We assume that for $t=0$ the total mass is concentrated in 
the first coordinate, i.e $p_0=(u_0,0, \dots, 0)$. 
The equation conserves positivity of $p$ and the whole mass
$u+v_1+...+ v_N = u_0$.
Thus, $p$ is a vector on the positive simplex in $\R^{N+1}$,
intersected by the hyperplane $u+v_1+...+ v_N = u_0$.
Of our interest is the first component, i.e. 
the amount of matter of particles of type $z_0$.

$\oAs$ is the generator of a special type of MPs.
It describe the change of types in the following way: Particles of type
$z_0$ can changes their type to type $z_i$ with rates $a_i$. The change of a 
particle of type $z_i$ back to type $z_0$ does not go in a direct way, but in $i$ steps.
Thus, we have an interaction between the $N+1$ types in $N$ loops
(see the picture).
\label{FMTME}

\begin{minipage}[b]{8cm}
\unitlength=2cm
\begin{picture}(15, 1.8)
\linethickness{0.2mm}
\put(3.25,1.25){\circle*{0.1}}         
\put(0.25,0.25){\circle*{0.1}}         
\put(1.75,0.25){\circle*{0.1}}         
\put(3.25,0.25){\circle*{0.1}}         
\put(6.0,0.25){\circle*{0.1}}         
\put(7.5,0.25){\circle*{0.1}}         

\put(3.0,1.2){\vector(-3,-1){2.5}}  
\put(3.1,1.2){\vector(-3,-2){1.2}}     
\put(3.25,1.15){\vector(0,-1){0.8}}     
\put(3.35,1.2){\vector(3,-1){2.5}}     
\put(3.45,1.2){\vector(4.7,-1){3.8}}     

\put(7.3,0.25){\vector(-1,0){1.0}}     
\put(5.8,0.25){\vector(-1,0){0.5}}     
\put(3.9,0.25){\vector(-1,0){0.5}}     
\put(3.0,0.25){\vector(-1,0){1.0}}     
\put(1.5,0.25){\vector(-1,0){1.0}}     
\put(0.4, 0.4){\vector(3,1){2.5}}  

\put(3.25,1.5){\makebox(0.0,0.0){$z_0$}}
\put(0.25,0.0){\makebox(0.0,0.0){$z_1$}}
\put(1.75,0.0){\makebox(0.0,0.0){$z_2$}}
\put(3.25,0.0){\makebox(0.0,0.0){$z_3$}}
\put(6.0,0.0){\makebox(0.0,0.0){$z_{N-1}$}}
\put(7.5,0.0){\makebox(0.0,0.0){$z_N$}}

\put(4.45,0.25){\makebox(0.0,0.0){$\dots\dots$}}  
\put(0.9,0.7){\makebox(0.0,0.0){$b_1$}}   
\put(1.35,0.4){\makebox(0.0,0.0){$b_2$}}  
\put(2.85,0.4){\makebox(0.0,0.0){$b_3$}}  
\put(6.35,0.4){\makebox(0.0,0.0){$b_N$}}  

\put(1.8,0.7){\makebox(0.0,0.0){$a_1$}} 
\put(2.6,0.7){\makebox(0.0,0.0){$a_2$}} 
\put(3.5,0.7){\makebox(0.0,0.0){$a_3$}} 
\put(4.5,0.7){\makebox(0.0,0.0){$a_{N-1}$}} 
\put(6.2,0.7){\makebox(0.0,0.0){$a_N$}} 
\end{picture}
\end{minipage}

~

Easy calculations show that the stationary solution $\mu$ satisfying $A^* \mu = 0$ has the form
\begin{align*}
 \mu = \frac 1 Z\left(1, \frac{a_1+\dots+a_N}{b_1}, 
\frac{a_2+\dots+a_{N}}{b_2}, \frac{a_3+\dots+a_{N}}{b_3},\dots, 
\frac{a_N}{b_N}\right) u_0,  
\end{align*}
where $Z$ is the suitable normalization such that $\sum_{j=0}^{N}\mu_j=u_0$.
Obviously,
\begin{eqnarray}\label{e632}
Z = 1 + \sum_{i=1}^N { 1 \over b_i }  \sum_{j=i}^N a_j~.
\end{eqnarray}
For the zeroth coordinate we have
\begin{align*}
u(\infty) = { 1 \over Z} .
\end{align*}
Since any $b_j > 0$,  this stationary solution is unique
and it is the equilibrium state for any  initial condition.

Let us check, whether detailed balance with respect to $\mu$ is satisfied. 
We have to check, 
that $A_{ij}\mu_j = A_{ji}\mu_i$. Since $A_{1j}\mu_j = A_{j1}\mu_1 = 0$ 
for $j\geq2$, we obtain that $a_2 = a_3 = \dots a_N =0$. 
Hence, the evolution of the states $z_2, \dots, z_N$ is
not coupled to the evolution of $z_0$ and $z_1$. In this case, we get
$N=1$, the two dimensional case, where every MP has the
detailed-balance property. That means, apart from trivial situations, the MP under consideration does not have the
detailed balance property.

The equation $\dot p = \oAs p$ is equivalent 
to the following system for $p=(u,v_1, \dots, v_N)$
\begin{equation*}
\left\{
\begin{aligned}
 \dot u &=  -a u  + b_1v_1\\
 \dot v_1 &= a_1 u  - b_1u + b_2v_2\\
 \dot v_2 &= a_2u - b_2 v_2 + b_3v_3\\
 \dot v_3 &= a_3u - b_3 v_3 + b_4v_4\\
 \dots&\dots\\
 \dot v_{N-1} &=a_{N-1}u - b_{N-1} v_{N-1} + b_Nv_N\\
 \dot v_N &= a_Nu - b_N v_N.
\end{aligned}
\right.
\end{equation*}

Using the Laplace transform,  we get the following equation for $(u,v_1,\dots,v_N)$
\begin{equation*}
\left\{
\begin{aligned}
 (\lambda + a)u - u_0 &= b_1v_1\\
 (\lambda + b_1)v_1 &=a_1u + b_2v_2\\
 (\lambda + b_2)v_2 &=a_2u + b_3v_3\\
 (\lambda + b_3)v_3 &=a_3u + b_4v_4\\
 \dots&\dots\\
 (\lambda + b_{N-1})v_{N-1} &=a_{N-1}u + b_Nv_N\\
 (\lambda + b_1)v_N &=a_Nu.
\end{aligned}
\right.
\end{equation*}
This yields for $u$
\begin{eqnarray}\label{e664o} \nonumber
 (\lambda + a)u - u_0 = \left(\frac{a_1 b_1}{\lambda + b_1} +
   \frac{a_2b_1b_2}{(\lambda + b_1)(\lambda+b_2)} +
   \frac{a_3b_1b_2b_3}{(\lambda + b_1)(\lambda + b_2)(\lambda + b_3)} + \dots
 \right.\\ \label{e664}
 + \left.\frac{a_Nb_1b_2\cdots b_N}{(\lambda + b_1)(\lambda + b_2)\cdots(\lambda + b_N)} \right) u. 
\end{eqnarray}
We define the kernel
\begin{eqnarray*}
 k(\lambda) = \sum_{j=1}^N a_j k_j(\lambda),~~
k_j(\lambda) = \prod_{i=1}^j\frac{b_i}{\lambda + b_i}
\end{eqnarray*}
and hence the equation for the Laplace transformed variable $u$ reads
\begin{eqnarray}\label{e662}
 \lambda u - u_0 = -au + k(\lambda)u.
\end{eqnarray}

Now, we formulate the memory equation in terms of $t\geq0$
and some properties of the kernel.
For this purpose, we introduce some quantities,
connected with Lagrange polynomials (see
the appendix for details) with different support
points $b_1,...,b_N$.
Let 
\begin{eqnarray*}
\psi_i^j = \prod_{k=1, k\neq i}^j \frac{b_k}{b_k - b_i}~,
\end{eqnarray*}
assuming $b_i \not= b_k$ for $i \not= k $. From the theory of Lagrange polynomials it is well known that
\begin{align*}
k_j(\lambda) = \prod _{i=1}^j \frac{b_i}{\lambda +b_i} = \sum_{i=1}^j
  \frac{b_i}{\lambda+b_i}\psi_i^j. 
\end{align*}
Using this, we can transform $k_j(\lambda)$ back and obtain
\begin{eqnarray}\label{e645}
K(t) &=& \sum_{j=1}^N a_j K_j(t),\\ \label{e646}
K_j(t) &=&  \sum_{i=1}^j b_i \psi_i^j \e^{-b_i t}.
\end{eqnarray}

The assumption $b_i \not= b_j$ for $i \not= j $ is not
principal. If some or all $b_i$ coincide, all formulae of the following can
be obtained by some suitable limits. This is obviously done for the
Laplace transform $k(\lambda)$. For $K(t)$ we get more complicated terms,
involving not only exponential but also polynomials with degree, depending on
the frequency of the $b_i$. We do not bore the reader with this technical 
complexity, since this is well known in the theory of Lagrange polynomials.
Moreover, from a practical point of view, in a generic Markov
matrix all entries can be chosen differently. 

Surely, a different situation is, if the modeling requires equal $b_i$. This is the case for instance for DDEs.
The case is considered in detail in chapter 4.

Now, we are ready for the following
\begin{theo}\label{TheoremMPtoME}
 Let $p=(u,v_1, \dots,v_N)$ be the solution of 
$\dot p = \oAs p$ with $p_0=(u_0, 0, \dots, 0)$ where $\oAs$ is given via (\ref{MarkovGeneratorGeneralCase}). 
Then $t\mapsto u(t)$ solves the memory equation
 \begin{eqnarray}\label{e663}
  \dot u = -a u + \int_0^t K(t-s)u(s)\d s,~ u(0)=u_0,
 \end{eqnarray}
where
$K(t) = \sum_{j=1}^N a_j K_j(t)$ with 
$K_j(t) = \sum_{i=1}^j b_i \psi_i^j \e^{-b_i t}$ and $a=\sum_j a_j=k(0)$.
Moreover, $K(t)\geq 0$ and $u_\infty=1/Z$ with $Z$ given by (\ref{e632}).
\end{theo}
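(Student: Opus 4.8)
**

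The plan is to**The plan is to establish the theorem in three stages, following the computation already performed in the excerpt but organized as a clean proof. First I would verify that $t\mapsto u(t)$ solves the stated memory equation; second I would confirm the asymptotic value $u_\infty = 1/Z$; and third I would prove the positivity $K(t)\geq 0$, which I expect to be the main obstacle.

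For the memory-equation step, I would take the system $\dot p = \oAs p$ with the banded generator \eqref{MarkovGeneratorGeneralCase} and apply the Laplace transform, exactly reproducing the triangular elimination already carried out: from the last equation $v_N = \frac{a_N}{\lambda+b_N}u$, and substituting upward, each $v_j$ becomes a rational multiple of $u$, yielding equation \eqref{e664} and hence the compact form \eqref{e662}, namely $\lambda u - u_0 = -au + k(\lambda)u$ with $k(\lambda)=\sum_{j=1}^N a_j k_j(\lambda)$ and $k_j(\lambda)=\prod_{i=1}^j \frac{b_i}{\lambda+b_i}$. Since multiplication by $k(\lambda)$ in Laplace space corresponds to convolution with the inverse transform $K$ in time space, and $\lambda u - u_0$ is the transform of $\dot u$, the inverse Laplace transform of \eqref{e662} is precisely \eqref{e663}. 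The explicit kernel then follows from the partial-fraction (Lagrange) decomposition $k_j(\lambda)=\sum_{i=1}^j \frac{b_i}{\lambda+b_i}\psi_i^j$ stated before the theorem, whose termwise inversion gives $K_j(t)=\sum_{i=1}^j b_i\psi_i^j \e^{-b_i t}$. The identity $a=k(0)=\sum_j a_j$ is immediate from $k_j(0)=1$.

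For the asymptotics, I would use the final-value behavior of the Laplace transform, $u_\infty=\lim_{\lambda\to 0}\lambda\, u(\lambda)$. Solving \eqref{e662} for $u$ gives $u=\frac{u_0}{\lambda + a - k(\lambda)}$, so $\lambda u = \frac{\lambda u_0}{\lambda + a - k(\lambda)}$. Because $a = k(0)$, the denominator vanishes at $\lambda=0$, and expanding $a-k(\lambda)=-k'(0)\lambda+O(\lambda^2)$ shows $u_\infty = \frac{u_0}{1 - k'(0)}$. It then remains to check that $-k'(0)=\sum_{j=1}^N a_j \sum_{i=1}^j \frac{1}{b_i}=\sum_{i=1}^N \frac{1}{b_i}\sum_{j=i}^N a_j$, which is exactly $Z-1$ from \eqref{e632}; this matches $u_\infty=1/Z$ and agrees with the stationary state $\mu$ computed directly from $\oAs\mu=0$.

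The positivity $K(t)\geq 0$ is the genuinely nontrivial point, since the individual $\psi_i^j$ can be negative and the representation \eqref{e646} is an alternating sum. The cleanest argument I would give avoids the explicit Lagrange form entirely: each $K_j$ is the inverse Laplace transform of $k_j(\lambda)=\prod_{i=1}^j \frac{b_i}{\lambda+b_i}$, and since each factor $\frac{b_i}{\lambda+b_i}$ is the transform of the nonnegative function $b_i\e^{-b_i t}$, the product is the transform of an iterated convolution $K_j = (b_1\e^{-b_1\,\cdot})\ast\cdots\ast(b_j\e^{-b_j\,\cdot})$. A convolution of nonnegative functions is nonnegative, so $K_j(t)\geq 0$, and then $K=\sum_j a_j K_j\geq 0$ because all $a_j\geq 0$. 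This probabilistic reading (each $K_j$ is the density of a sum of independent exponential waiting times, i.e. a hypoexponential distribution) makes positivity transparent and sidesteps the sign difficulties of the alternating Lagrange expansion, which I would mention only as the explicit closed form rather than as the vehicle for the positivity proof.
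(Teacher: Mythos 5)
Your proposal is correct, and the first two stages track the paper's own proof almost exactly: the same Laplace-transform elimination down to $\lambda u - u_0 = -au + k(\lambda)u$, and the same final-value computation $u_\infty=\lim_{\lambda\to 0}\lambda u(\lambda) = u_0/(1-k'(0))$ with $-k'(0)=\sum_j a_j\sum_{i\le j}1/b_i = Z-1$ (the paper expands each $k_j(\lambda)=1-\lambda\sum_{i\le j}1/b_i+o(\lambda)$ rather than differentiating $k$ as a whole, but this is the same calculation). Where you genuinely diverge is the positivity of $K_j$. The paper proves $K_j(t)\ge 0$ by rewriting the alternating Lagrange sum $\sum_i b_i\psi_i^j\e^{-b_i t}$ as a simplex integral of $(-1)^{j-1}f^{(j-1)}(\la\alpha,s\ra t)$ with $f(x)=\e^{-xt}$, invoking the divided-difference identity from its appendix; you instead read $k_j(\lambda)=\prod_{i\le j}\frac{b_i}{\lambda+b_i}$ as the transform of the iterated convolution $(b_1\e^{-b_1\cdot})\ast\cdots\ast(b_j\e^{-b_j\cdot})$ and conclude nonnegativity directly, since a convolution of nonnegative functions is nonnegative. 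The two arguments encode the same underlying fact (the convolution integral is itself an integral over a simplex, and it is what produces the hypoexponential density), but yours is more elementary and self-contained: it needs only the injectivity of the Laplace transform on continuous functions to identify the convolution with the explicit expression $\sum_i b_i\psi_i^j\e^{-b_i t}$, and it dispenses with the appendix machinery. What the paper's simplex-integral formulation buys in exchange is an explicit closed-form representation of $K_j$ that the authors reuse elsewhere (e.g.\ for the remark on repeated $b_i$ and Lemma \ref{reslem}); your convolution argument also handles coinciding $b_i$ without modification, which is a small additional advantage. The only point you leave implicit that the paper addresses is the regularity needed to pass from the transformed identity back to the differential equation \eqref{e663}; a one-line remark that $\lambda u(\lambda)$ is analytic and bounded on the right half-plane, so $u$ is continuously differentiable, would close that gap.
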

\begin{proof}
From the definition of $k(\lambda)$ it is clear that $u(\lambda)$ defined by
the MP is the solution to (\ref{e662}). If the inverse transformed function
$t\mapsto u(t)$  is regular enough, it is solution to (\ref{e663}).

Rewriting (\ref{e664}) as 
\begin{eqnarray}\label{e665}
\lambda u(\lambda) = 
{\lambda \over \lambda + a - \sum_{j=1}^N a_j k_j(\lambda)} ~ u_0
\end{eqnarray}
Since the $k_j(\lambda)$ are analytical functions and bounded on the right plane, so 
is $\lambda u(\lambda)$. Hence from the properties of the
Laplace 
transform it follows that $u(t)$ is continuously differentiable. Thus, it solves 
(\ref{e663}).

To calculate $u_\infty$ we use the representation (\ref{e665}) and investigate the
behavior of $k_j(\lambda)$ for $\lambda \rightarrow \infty$. We have
\begin{eqnarray*}
k_j(\lambda) &=& k_j(0) + \lambda k_j'(0) + o(\lambda)
\eyy
1 +  \lambda 
\left.\left(\frac{b_1b_2\cdots b_j}{(\lambda + b_1)(\lambda + b_2)
\cdots(\lambda + b_j)} \right)'\right|_{\lambda = 0 }
 + o(\lambda)
\eyy
1 - \lambda 
\left.\frac{b_1b_2\cdots b_j \cdot \left(b_1b_2\cdots b_j 
\sum_{i=1}^j {1 \over b_i} + o(\lambda) \right)
}{\big[(\lambda + b_1)(\lambda + b_2)
\cdots(\lambda + b_j)\big]^2} \right|_{\lambda = 0 }
 + o(\lambda)
\eyy
1 - \lambda \sum_{i=1}^j {1 \over b_i}  + o(\lambda).
\end{eqnarray*}
By definition $a=\sum_{j=1}^N a_j$, and hence, it follows from (\ref{e665}) 
\begin{eqnarray*}
u(\infty) &=&
\lim\limits_{\lambda \pfk \infty} \lambda u(\lambda) = 
\lim\limits_{\lambda \pfk \infty} 
{\lambda \over \lambda + a - \sum_{j=1}^N a_j 
\left[1 - \lambda \sum_{i=1}^j {1 \over b_i}  + o(\lambda)\right]
} ~ u_0
\eyy
{1\over 1+\sum_{j=1}^N a_j \sum_{i=1}^j {1 \over b_i} } ~ u_0
=
{1\over 1 + \sum_{j=1}^N { 1 \over b_j }  \sum_{i=j}^N a_j } ~ u_0~,
\end{eqnarray*}
what is exactly the zeroth coordinate of $\mu$, i.e. $u_\infty=1/Z$.

The positivity of the $K_j(t)$, $t \geq 0$ follows from their
representation with simplex integrals (see the appendix). We have
\begin{eqnarray*}
K_j(t) &=& \sum_{i=1}^j b_i \psi_i^j \e^{-b_i t}
=
\left. 
\int_{S_j} (-1)^{j-1} f^{(j-1)}\big( \la \alpha , s \ra t \big) 
\right|_{s_j = 1 - s_1 - s_2 - s_{j-1}} \d s_{j-1} \cdots  \d s_1
\end{eqnarray*}
with  $f(x) = \e^{-x t}$ and
$\la \alpha , s \ra = \alpha_1 s_1 +  
\alpha_2 s_2 + \ldots + \alpha_j s_j$. Since
$(-1)^{j-1} f^{({j-1})}\big( \la \alpha , s \ra t \big) = 
t^{j-1} \e^{-  \la \alpha , s \ra  t} \geq 0$ and any $a_j\geq0$, we conclude the positivity of $K_j(t)$ and therefore also $K(t) \geq 0$.
This completes the proof of the theorem.
\end{proof}

\subsection{From Memory to Markov}

We consider memory equations of the form
\begin{align*}
 \dot u(t) = -a u + K\ast u = -a u + \int_0^t K(t-s)u(s)\d s,
\end{align*}
where $a>0$ is a real parameter and $K$ is a positive kernel. 
The aim is to embed the evolution of $u$ into a MP 
introducing new variables.

Our main assumptions are
$K(t)\geq0$ and $\int_0^\infty K(t)\d t = a$.
Clearly, starting with some given $K(t)$ we want to end up with a kernel 
of the shape (\ref{e645}-\ref{e646}). Then going forward 
to a kernel like in (\ref{e664}),
the entries of the Markov generator matrix can be taken immediately.

The kernels (\ref{e646}) are positive although this are linear combinations
of exponential with -- maybe -- negative coefficients.

It may seem that any nonnegative kernel $K(t)$ can be presented in such a
form. But this is not the case. We show this in a 

~

{\textbf  {Counterexample:}} Let
\begin{eqnarray*}
K(t) =  3 \e^{- t} - 8 \e^{- 2 t} + 6 \e^{-3 t}
\end{eqnarray*}
and 
\begin{eqnarray*}
f(t) =  \e^{4 t} K(t) =  3 \e^{3 t} - 8 \e^{2 t} + 6 \e^{ t}
\end{eqnarray*}
$f(t)$ has a unique minimum $f(0.215315...) = 0.8590718...$. 
Thus $K(t) \geq 0$. 

Seeking for coefficients $A,B,C,D,E,F,G$ (this is the representation 
(\ref{e646})) with
\begin{eqnarray*}
K(t) &=& A  \e^{-3 t} + B \e^{-2 t} + C \e^{- t} + 
D { \e^{-t} - \e^{-2 t} \over 1 } + E { \e^{-t} - \e^{-3 t} \over 2 } +
F { \e^{-2 t} - \e^{-3 t} \over 1 } + \\ &+& G 
\left(  {\e^{-t} \over 1 \cdot 2 } +  {\e^{-2t} \over (-1) \cdot 1 } 
+  {\e^{-3t} \over 1 \cdot 2 } \right)
\end{eqnarray*}
the resulting system for the coefficients leads to
\begin{eqnarray*}
0 = 2 + D + E + F + B + C
\end{eqnarray*}
that does not have nonnegative solutions.

~

We think, there is no hope to find a corresponding MP for an arbitrary
nonnegative kernel.
Therefore we go another way and try to derive a class
of sensible kernels starting from physical
considerations. Furthermore, the following reasoning shows how
the time interval of the memory effect is
connected with rates of the loops of the MP.

First of all we have to ask: How one can model a meaningful kernel for a ME. 
We can assume that the dependence on the past is concentrated at some time
point before the present, say $t-t_1$, ... $t-t_N$ where $t_j$ are ordered time
values, i.e. $0<t_1<t_2<\dots <t_N$, with some coefficients
$\gamma_1, ..., \gamma_N$ with $\gamma_i \geq 0$ and $\sum \gamma_i = 1$ that
gives the relative proportion of each time point. The corresponding  memory
kernel of such an ansatz is
\begin{eqnarray*}
\tilde K(t) = \sum_{j=1}^N \gamma_j \delta(t-t_j)
\end{eqnarray*} 
(here $\delta$ means the ``$\delta$-function'', the ``density'' of the Dirac
measure). The kernel $\tilde K$ occurs when starting from a discrete 
time model, like equation
(\ref{e172}). Clearly, it is a first guess. A real memory kernel seems to be more smeared. Therefore, we can try to find kernels $\tilde K_j(t)$ with
mean time at $t_j$, i.e 
\begin{align*}
 \int_0^\infty \tilde K_j(t) \d t = a,~~~
 \int_0^\infty t \tilde K_j(t) \d t = \int_0^\infty t \delta(t-t_j) \d t = t_j.
\end{align*}
We will show that such kernels $ \tilde K_j(t)$ can be found and
it is possible to find a suitable MP for them. Note, that
this does not determine the kernels $\tilde K_j$ uniquely, of course.

We show that our kernels  of shape  (\ref{e645}) are suitable for this.

\begin{prop}
 Let a sequence $0<t_1<t_2<\dots <t_N<\infty$ be given where $(t_{j}-t_{j-1})$ are pairwise distinct. There are kernels $K(t) = \sum_{j=1}^N a_j K_j(t)$ such that $K\geq 0$ and $\int_0^\infty K(t)\d t = a$ and  $\int_0^\infty tK_j(t)\d t = t_j$. 
\end{prop}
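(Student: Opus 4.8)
The plan is to run the construction of Theorem \ref{TheoremMPtoME} in reverse: the prescribed times $0<t_1<\dots<t_N$ should determine the rates $b_1,\dots,b_N$, after which the kernels $K_j$ from (\ref{e646}) are completely forced. The starting observation is that the two integral conditions imposed on $K_j$ are simply the value and the first Taylor coefficient of its Laplace transform $k_j(\lambda)=\prod_{i=1}^j \frac{b_i}{\lambda+b_i}$ at $\lambda=0$. Indeed $\int_0^\infty K_j(t)\,\d t = k_j(0)=1$ automatically, and the expansion already computed in the proof of Theorem \ref{TheoremMPtoME}, namely $k_j(\lambda)=1-\lambda\sum_{i=1}^j \frac{1}{b_i}+o(\lambda)$, yields $\int_0^\infty t\,K_j(t)\,\d t = -k_j'(0)=\sum_{i=1}^j \frac{1}{b_i}$.

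First I would impose the mean-time conditions $\sum_{i=1}^j \frac1{b_i}=t_j$ for $j=1,\dots,N$. This is a triangular, telescoping system, and subtracting consecutive equations solves it explicitly: with the convention $t_0=0$,
\[
\frac{1}{b_j}=t_j-t_{j-1},\qquad\text{i.e.}\qquad b_j=\frac{1}{t_j-t_{j-1}}.
\]
Because the $t_j$ are strictly increasing, every $b_j$ is strictly positive, which is exactly the admissibility condition required in (\ref{MarkovGeneratorGeneralCase}). The hypothesis that the increments $t_j-t_{j-1}$ be pairwise distinct translates precisely into $b_i\neq b_k$ for $i\neq k$, which is what makes the quantities $\psi_i^j$ and the representation (\ref{e646}) of $K_j$ well-defined. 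This matching of the distinctness hypothesis to the non-coincidence requirement on the rates is the one place where the assumption is genuinely used, and it is the only real subtlety; the rest is bookkeeping. (Without it one would have to pass to the coalescing-rate limit producing polynomial-times-exponential terms, as the text already notes.)

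Finally I would fix the $b_j$ as above, define each $K_j$ through (\ref{e646}), and choose any nonnegative coefficients $a_1,\dots,a_N$ with $\sum_{j=1}^N a_j=a$ (always possible for the given $a>0$, e.g. $a_j=a/N$). Positivity of each $K_j$, hence of $K=\sum_{j=1}^N a_j K_j$, is already granted by Theorem \ref{TheoremMPtoME} via the simplex-integral representation, so no fresh positivity estimate is needed. The normalization follows from $\int_0^\infty K(t)\,\d t=\sum_{j=1}^N a_j\,k_j(0)=\sum_{j=1}^N a_j=a$, and the identities $\int_0^\infty t\,K_j(t)\,\d t=t_j$ hold by the very choice of the $b_j$. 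This exhibits the desired family of kernels and, at the same time, the coefficients $a_j,b_j$ from which the generator (\ref{MarkovGeneratorGeneralCase}) can be read off, completing the proof.
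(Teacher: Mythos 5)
Your proposal is correct and follows essentially the same route as the paper: define $b_j=1/(t_j-t_{j-1})>0$ (pairwise distinct by hypothesis), take the kernels $K_j$ from (\ref{e646}), and read off the moment conditions from $k_j(0)=1$ and $-k_j'(0)=\sum_{i=1}^j 1/b_i=t_j$. Your explicit appeal to the simplex-integral representation for positivity is a small completeness improvement over the paper's proof, which leaves that point implicit from Theorem \ref{TheoremMPtoME}.
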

\begin{proof}
We define $b_j\in\R$ via $t_i=\sum_{j=1}^i \frac{1}{b_j}$. Since the $t_i$ are
ordered, we get $b_j>0$. Since $(t_{j}-t_{j-1})$, the $b_j$ are pairwise distinct. We define 
\begin{align*}
 K(t) = \sum_{j=1}^N a_j K_j(t), ~~ \mathrm{where~~} K_j(t) = \sum_{i=1}^j b_i
 \psi_i^j \e^{-b_i t}. 
\end{align*}
We prove that $K$ satisfies the desired properties.
Using the Laplace transform, we get
\begin{align*}
 \mathcal{L}(K_j(t))(\lambda) = \sum_{i=1}^j b_i \psi_i^j
 \frac{1}{\lambda+b_i} = \prod _{i=1}^j \frac{b_i}{\lambda + b_i}
 =:k_j(\lambda). 
\end{align*}
This yields $\int_0^\infty K_j(t)\d t = k_j(\lambda = 0) = 1$. Moreover,
$\int_0^\infty t K_j(t)\d t = -k_j'(\lambda = 0)$. We have 
\begin{align*}
 k_j'(\lambda) =
 \sum_{i=1}^j\frac{b_1}{\lambda+b_1}\cdot\frac{b_2}{\lambda+b_2}\cdots\frac{-b_i}{(\lambda+b_i)^2}\cdots\frac{b_{j-1}}{\lambda+b_{j-1}}\cdot\frac{b_j}{\lambda+b_j}  
\end{align*}
This yields $-k_j'(\lambda=0) = \sum_{i=1}^j \frac{1}{b_i} = t_j$,
i.e. $\int_0^\infty t K_j(t)\d t =  t_j$. 
\end{proof}

\begin{theo}
 Let $K(t)$ be a memory kernel of the form
\begin{align*}
 K(t) = \sum_{j=1}^N \alpha_j K_j(t), ~~ 
\mathrm{where~~} K_j(t) = \sum_{i=1}^j b_i \psi_i^j \e^{-b_i t}.
\end{align*}
and $\alpha = \sum_j \alpha_j$. Let $u$ be the solution to
the equation $\dot u(t) = -\alpha u + K\ast u$ with $u(0)=u_0$. 
Then, there is a MP $\dot p = \oAs p$
in $\R^{N+1}$ generated by a Markov matrix $\oA$ and an
initial condition $p(0)$ such that $u(t) = p_0(t)$.
\end{theo}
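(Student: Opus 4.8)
The plan is to run the construction of Theorem~\ref{TheoremMPtoME} in reverse. Since the kernel is already presented in the canonical form (\ref{e645})--(\ref{e646}), its data directly prescribe the entries of a Markov generator of the shape (\ref{MarkovGeneratorGeneralCase}). Concretely, I would read off the decay rates $b_1,\dots,b_N$ (positive and pairwise distinct, as these are exactly the numbers entering the $\psi_i^j$ and the $K_j$), set the loop-rates $a_j := \alpha_j$ for $j=1,\dots,N$, and put $a := \sum_{j=1}^N a_j = \alpha$. With these numbers I would assemble $\oAs$ exactly as in (\ref{MarkovGeneratorGeneralCase}) and choose the initial vector $p(0)=(u_0,0,\dots,0)$.

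The one thing that must be checked is that this $\oAs$ is a genuine Markov generator. Its off-diagonal entries are the $a_j$ (first column) and the $b_j$ (the super-diagonal), so nonnegativity of the off-diagonal is equivalent to $a_j=\alpha_j\geq 0$ together with $b_j>0$; the latter holds by construction, and the former is precisely the requirement that $K$ be built from the $K_j$ with nonnegative weights. The column-sum condition is automatic: the first column sums to $-a+\sum_{j=1}^N a_j=0$ by the choice of $a$, and every other column $j$ contains only the pair $b_j,-b_j$ and therefore also sums to zero. Hence $\oAs$ is an admissible Markov generator, and since all $b_j>0$ it has a unique stationary state.

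With $\oAs$ in hand the conclusion follows from Theorem~\ref{TheoremMPtoME}: with $a_j=\alpha_j$ the kernel it produces, $\sum_{j=1}^N a_j K_j$, is exactly the given $K$, and $a=\sum_j a_j=\alpha$ matches the decay coefficient of the memory equation. Thus the first coordinate $t\mapsto p_0(t)$ of the solution of $\dot p=\oAs p$, $p(0)=(u_0,0,\dots,0)$, solves $\dot u=-\alpha u+K\ast u$, $u(0)=u_0$. Because this linear memory equation has a unique solution — equivalently, because the Laplace transforms agree through (\ref{e662})/(\ref{e664}) and the Laplace transform is injective — we obtain $u(t)=p_0(t)$, as claimed.

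The only real obstacle is therefore the nonnegativity of the weights $\alpha_j$: once $K$ is written in the form (\ref{e645})--(\ref{e646}) with $\alpha_j\geq 0$, the construction is a direct read-off followed by an appeal to Theorem~\ref{TheoremMPtoME}. As the counterexample above shows, however, a positive kernel need not admit such a representation at all, so the substance of the statement lies entirely in the standing assumption that $K$ is already decomposed along the loop-kernels $K_j$ with nonnegative coefficients.
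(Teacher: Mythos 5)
Your proposal is correct and follows essentially the same route as the paper, whose own proof is a one-line read-off: set $a=\alpha$, $a_i=\alpha_i$, take the $b_i$ from the kernel, start at $p_0=(u_0,0,\dots,0)$, and invoke Theorem~\ref{TheoremMPtoME}. You merely fill in the details the paper leaves implicit — the verification that the assembled matrix really is a Markov generator (which, as you rightly stress, requires the standing assumption $\alpha_j\geq 0$) and the uniqueness/Laplace-injectivity step identifying $u$ with $p_0$.
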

\begin{proof}
Define the Markov generator matrix via $a=\alpha$, 
$a_i=\alpha_i$, $b_i=\beta_i$. The initial condition for the MP is $p_0=(u_0,0,\dots,0)$.
The claim follows.
\end{proof}

For the asymptotic behavior of the ME, we immediately get the following statement.
\begin{cor}
Let $K(t)=\sum_{j=1}^N a_i K_i(t)$, where $K_i(t) = \sum_{j=1}^i b_j \psi_j \e^{-b_j t}$ and $a = \sum_j a_j$. Let $u$ be the solution to
the equation $\dot u(t) = -a u + K\ast u$ with $u(0)=u_0$. Then $u(t)\rightarrow u_\infty$ as $t\rightarrow \infty$, where $u_\infty = \frac{1}{Z}u_0$ and $Z$ is given by (\ref{e632}).
\end{cor}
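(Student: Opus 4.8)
The plan is to reduce the statement to results already proved rather than to analyze the memory equation directly. First I would invoke the preceding Theorem: since $K$ has exactly the shape (\ref{e645})--(\ref{e646}) with $a=\sum_j a_j$, that theorem produces a Markov process $\dot p = \oAs p$ in $\R^{N+1}$, where $\oAs$ is the generator (\ref{MarkovGeneratorGeneralCase}) built from the very same rates $a_j, b_j$ and the initial datum is $p_0=(u_0,0,\dots,0)$, whose zeroth component $p_0(t)$ coincides with the solution $u(t)$ of the memory equation. Thus it suffices to study the long-time behavior of a single coordinate of a genuine Markov trajectory.

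Second, I would appeal to the qualitative theory of Markov generators recalled in the introduction. Because every $b_j>0$, the chain is not broken and $\oAs$ has a unique stationary state $\mu$; equivalently, the nonzero eigenvalues of $\oAs$ all have strictly negative real part. Consequently every trajectory $\oT^*(t)p_0$ converges to $\mu$ regardless of $p_0$, and in particular the zeroth coordinate converges, so that $u(t)=p_0(t)\to\mu_0$ as $t\to\infty$. Third, I would identify the limit: the zeroth coordinate of $\mu$ was computed when $\oAs$ was first analyzed, giving $\mu_0=u_0/Z$ with $Z$ as in (\ref{e632}), and the same value was re-derived in the proof of Theorem \ref{TheoremMPtoME} via the Abelian limit $\lim\lambda\hat u(\lambda)$. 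Either reference closes the argument and yields $u_\infty=u_0/Z$.

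The only genuinely delicate point is the convergence claim in the second step. One must be sure that the stationary state is not merely unique but globally attracting, i.e. that $0$ is a simple eigenvalue and all remaining eigenvalues sit strictly in the left half-plane; the hypothesis $b_j>0$ for all $j$ is precisely what excludes the degenerate situations (broken loops) in which additional eigenvalues could touch the imaginary axis.

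As an alternative to the spectral argument, I could bypass the Markov picture and reason purely by Laplace transform: apply the final-value theorem to $\hat u$ in the form (\ref{e665}), expand $k_j(\lambda)=1-\lambda\sum_{i=1}^j b_i^{-1}+o(\lambda)$ near $\lambda=0$, and read off $\lim_{\lambda\to 0^+}\lambda\hat u(\lambda)=u_0/\bigl(1+\sum_{j=1}^N a_j\sum_{i=1}^j b_i^{-1}\bigr)=u_0/Z$, the double sum being a mere reindexing of (\ref{e632}). This second route trades the spectral-gap verification for the requirement that $\lambda\hat u(\lambda)$ be analytic and bounded up to the imaginary axis, which again holds exactly because the poles of $\hat u$ — the eigenvalues of $\oAs$ — lie strictly to the left of $0$.
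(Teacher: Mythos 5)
Your proposal is correct and matches the paper's (implicit) argument: the corollary is stated as an immediate consequence of the Memory-to-Markov theorem, which identifies $u(t)$ with the zeroth component of the Markov process generated by (\ref{MarkovGeneratorGeneralCase}), combined with the stationary-state/Laplace-transform computation of $u_\infty=u_0/Z$ already carried out in Theorem \ref{TheoremMPtoME}. Your alternative final-value-theorem route is likewise the computation the paper itself performs (note you correctly take $\lambda\to 0^+$, where the paper's displayed limit $\lambda\to\infty$ is a typo given the expansion of $k_j$ near $\lambda=0$).
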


\subsection{Remarks}

\begin{enumerate}


\item
Kernels like 
$k_j(\lambda) = \prod_{i=1}^j\left(\frac{b_i}{\lambda+b_i}\right)^{m_i}$ 
with suitable chosen $m_i\in\N$ may
approximates a $\delta$-kernel better. Especially it allows to take
into account
more moments then only the first one, or equivalently to allow
the $b_i$ to be equal. This is possible without any principal
problems (see the note above Theorem 3.1).
A special case is treated in the next chapter, 
where one delay is approximated arbitrary precise.
To prove positivity of the corresponding functions Lemma 
\ref{reslem} from the appendix can be used.

Kernels like in  (\ref{e664}) are rational functions of degree $N$,
having poles on the left plane. They
approximate meromorphic functions. This makes one able to consider more
general kernels then linear combinations of exponents -- 
at least approximately.


\item
There are other (similar) MP that lead to a ME and vice versa. 
For example the MP with the generator
\begin{align*}
\oAs=\begin{pmatrix}
 -a & c_1 & c_2 & c_3 & \dots & c_N\\
  a & -c_1 - b_1 & 0 & 0 &\dots & 0 \\
  0 & b_1 & -c_2 - b_2 & 0 &\dots & 0 \\
  0 & 0 &  b_2 & -c_3 - b_3 &\dots & 0 \\
  \dots & \dots &  \dots & \dots &\dots & \dots \\
  0 & 0 &  0 & 0 & b_{N-1} & -c_N 
\end{pmatrix}, 
\end{align*}
can also be used for embedding the presented exponential kernels. 
Such MP can be understood in the same manner like at the picture on page
\pageref{FMTME} but with reversed arrows.
Although this approach is more difficulty from a technical 
point of view.


\item
The presented results can be applied in various manner. 
We focus on ordinary differential equations to present the general 
idea. Linear MEs in infinite dimensional space
like diffusion equations with time depending diffusion coefficients 
are also possible.

Moreover, the well known tools for investigating MP, 
like inequalities for Lyapunov functions (see \cite{stephan2})
can now be carried over to explore ME.

\end{enumerate}


\section{Special Markov process leads to a Delay Differential equation}\label{DDE}
In this section we consider a special form of the MP. We define $a_j=0$ for $j={1,2, \dots, N-1}$ and put $a_N=a$ and $b_j=b\in\R$. Using the observation from the last section we consider a general cyclic
MP with one single but long loop. The MP in $\R^{N+1}$ is generated by the matrix
\begin{align*}
\oAs=\begin{pmatrix}
 -a & b & 0 & \cdots & 0\\
  0 & -b & b & \cdots& 0 \\
  0 & 0 & -b &\cdots& 0 \\
  \vdots &  & & \ddots & b \\
  a &  \cdots & 0 &\cdots & -b
\end{pmatrix}.
\end{align*}
We assume the
initial mass is concentrated in the first reservoir. Then, the equation reads $\dot{p}(t) = \oAs p(t)$ with $p(0) = p_0$, 
where $p = (u, v_1, v_2, \dots, v_n)^T$ and $p_0 = (u_0, 0, \dots,0)^T$.

\begin{minipage}[b]{10cm}
\unitlength=2cm
\begin{picture}(15, 1.8)
\linethickness{0.2mm}
\put(3.25,1.25){\circle*{0.1}}         
\put(0.25,0.25){\circle*{0.1}}         
\put(1.75,0.25){\circle*{0.1}}         
\put(3.25,0.25){\circle*{0.1}}         
\put(6.0,0.25){\circle*{0.1}}         
\put(7.5,0.25){\circle*{0.1}}         

\put(3.45,1.2){\vector(4.7,-1){3.8}}     

\put(7.3,0.25){\vector(-1,0){1.0}}     
\put(5.8,0.25){\vector(-1,0){0.5}}     
\put(3.9,0.25){\vector(-1,0){0.5}}     
\put(3.0,0.25){\vector(-1,0){1.0}}     
\put(1.5,0.25){\vector(-1,0){1.0}}     
\put(0.4, 0.4){\vector(3, 0.9){2.7}}  

\put(3.25,1.5){\makebox(0.0,0.0){$z_0$}}
\put(0.25,0.0){\makebox(0.0,0.0){$z_1$}}
\put(1.75,0.0){\makebox(0.0,0.0){$z_2$}}
\put(3.25,0.0){\makebox(0.0,0.0){$z_3$}}
\put(6.0,0.0){\makebox(0.0,0.0){$z_{N-1}$}}
\put(7.5,0.0){\makebox(0.0,0.0){$z_N$}}

\put(4.45,0.25){\makebox(0.0,0.0){$\dots\dots$}}  
\put(0.9,0.7){\makebox(0.0,0.0){$b$}}   
\put(1.35,0.4){\makebox(0.0,0.0){$b$}}  
\put(2.85,0.4){\makebox(0.0,0.0){$b$}}  
\put(6.35,0.4){\makebox(0.0,0.0){$b$}}  

\put(6.2,0.7){\makebox(0.0,0.0){$a$}} 
\end{picture}
\end{minipage}

The stationary solution is
\begin{eqnarray*}
\mu = \frac{1}{Z}\left( {1 \over a} , {1 \over b} , {1 \over b} , ..., {1 \over b}
\right)^Tu_0\in\R^{N+1},
\end{eqnarray*}
where $Z = \frac{1}{a} + \frac{N}{b} = {b + a N \over a b}$. 
Note, the system does not have the detailed balance property.

We get
\begin{align*}
 (\lambda + a) \hat u - u_0 = a \left(\frac{b}{\lambda + b}
 \right)^Nu. 
\end{align*}
It holds
\begin{align*}
  \left(\frac{b}{\lambda + b}\right)^N = \mathcal {L}\left(\frac{b^N}{(N-1)!}t^{N-1}\e^{-b t}\right)(\lambda).
\end{align*}

\begin{minipage}[b]{7cm}
Hence, we get
\begin{align*}
 \dot u(t) &= -a u(t) + \frac{ab^N}{(N-1)!}  \int _0^t
 s^{N-1}\e^{-b s}u (t-s) \D s \\
 &= -a \left( u(t) - \int _0^t K_N(s) u (t-s) \D s\right),
\end{align*}
where we introduced the kernel 
$$K_N(t) := \frac{b^N}{(N-1)!}  t^{N-1}\e^{-b t}.$$
\end{minipage}
\hfill
\begin{minipage}[b]{7cm}
\unitlength=1cm
\begin{picture}(8,5)
\linethickness{0.2mm}
\put(0.0,0.0){\includegraphics[width=7cm]{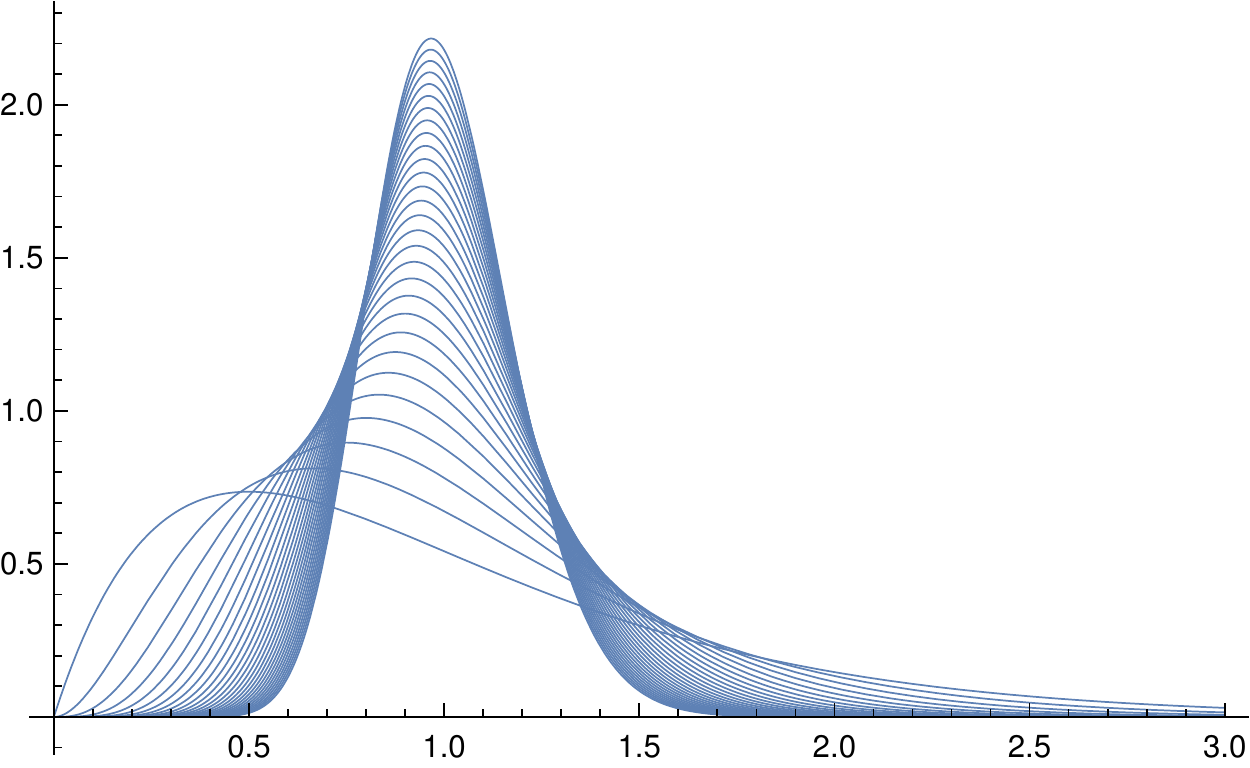}}
\put(3.5,4.5){Kernel $K_N(t)$ for:}
\put(3.5,4.0){$b=\frac{N}{T}$}
\put(3.5,3.5){$T=1$}
\put(3.5,3.0){$N=2,\dots, 30$}
\put(6.8, 0.6){\makebox(0.0,0.0){$t$}}
\end{picture}
\end{minipage}

~

A delay equation can be understood as a memory equation with a $\delta$-kernel. To do this, we fix $T>0$ and introduce $\delta_T(t)=\delta(t-T)$. 
We get
\begin{align*}
 \int_0^\infty \delta_T(t)\e^{-\lambda t} \D t = \int_0^\infty \delta(t - T)\e^{-\lambda t} \D t = \e^{-\lambda T}.
\end{align*}
Moreover, for $t>T$ we have
\begin{align*}
 u(t-T) &= \int _0^\infty u(s) \delta(t-T-s)\D s = \int _0^\infty u(s) \delta_T(t-s)\D s =\\
  &= \int _0^t u(s) \delta_T(t-s)\D s = u(t)\ast \delta_T(t).
\end{align*}
Hence,
\begin{align*}
 \mathcal L(u(t-T))(\lambda) &=  \hat u(\lambda) \e^{-\lambda T}.
\end{align*}

Putting $b=\frac{N}{T}$, we approximate the Laplace transform of the kernel $\delta_T$, i.e.
\begin{align*}
  \mathcal L(\delta_T)(\lambda) = e^{-\lambda T} \approx 
\left(1+\frac{\lambda T}{N}\right)^{-N} 
= \left(\frac{\frac N T}{\frac N T + \lambda}\right)^N 
= \mathcal{L}(K_N(t))(\lambda).
\end{align*}
Hence, we conclude
\begin{align*}
  \mathcal{L}(K_n(t))(\lambda) \xrightarrow{n\rightarrow \infty} e^{-\lambda T} = \mathcal L (\delta(t-T))(\lambda),
\end{align*}
and the limiting (DDE) reads as
\begin{align*}
 \dot u = \begin{cases}
           -a u(t), ~~&\mathrm{if~~} 0\leq t\leq T\\
           -a u(t) + au(t-T), ~~&\mathrm{if~~} t\geq T,
          \end{cases}
\end{align*}
or equivalently
\begin{align}\label{LimittingDDE}
 \dot u = -a u(t) + au(t-T), \mathrm{~~for} ~~t\geq T, \mathrm{~~~and~~~} u|_{[0,T]}(t) = \e^{-a t}u_0. 
\end{align}
Let us note that the initial condition $u|_{[0,T]}(t) = \e^{-a t}u_0$ results 
from the modeling ansatz. No other initial condition is possible.

Let us compute the limiting stationary solution for $N\rightarrow \infty$ 
of the first coordinate of the MP. This means the MP has long loops, 
but mass is transferred with a high rate. We have $Z = \frac{Na + b}{ab}$. 
Putting $b=\frac{N}{T}$, we conclude for the zeroth coordinate of the 
stationary solution
\begin{align*}
\mu_0 = \frac{1}{Za} = \frac{b}{Na+b} = \frac{\frac N T}{Na + \frac N T} = \frac{1}{1+aT}.
\end{align*}
\begin{minipage}[b]{5cm}
 \bigskip\bigskip
 The solution of the DDE and the stationary solution $\mu_0$ of the MP can be seen in the picture. The solution of the DDE converges nicely to $\mu_0$.
\end{minipage}
~~~~
\begin{minipage}[l]{10cm}
\unitlength=1cm
\begin{picture}(8,3)
\linethickness{0.2mm}
\put(0.0,0.0){\includegraphics[width=7cm]{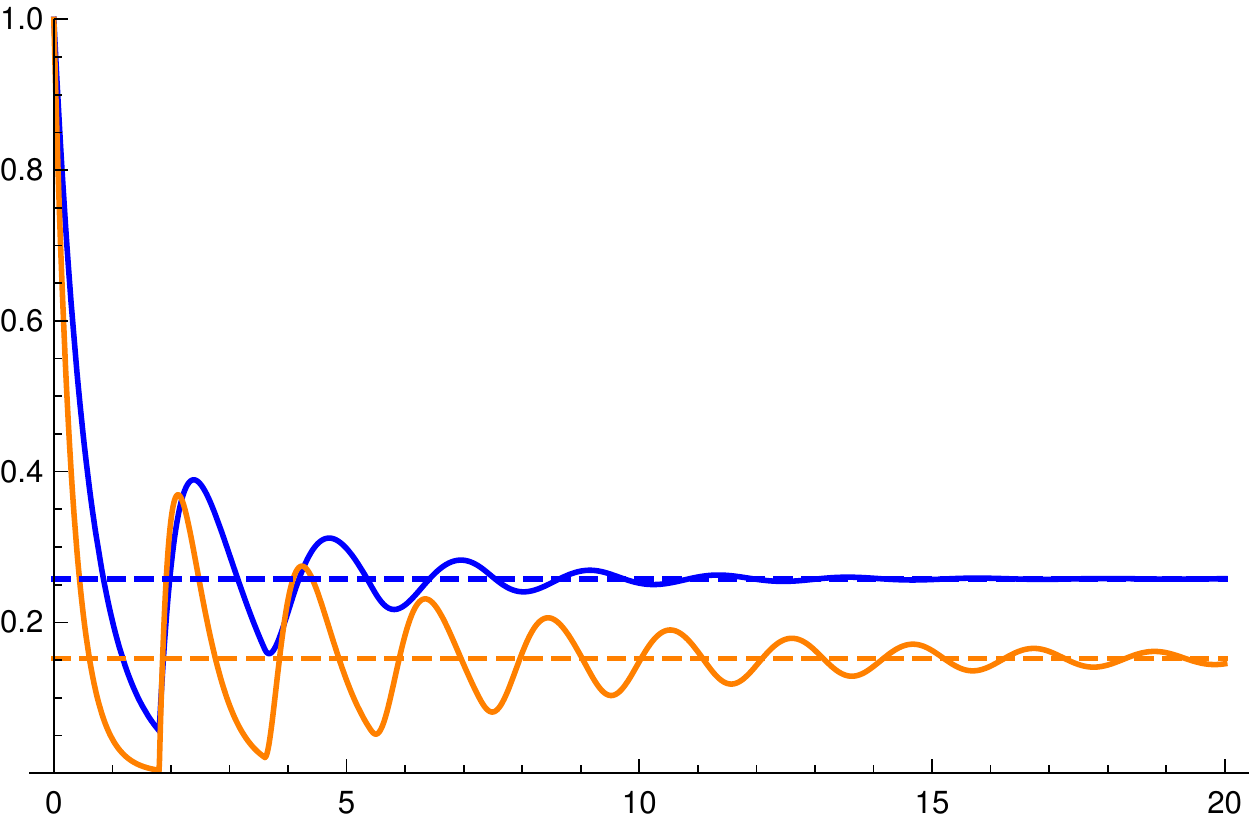}}
\put(1.0,3.5){Solution for DDE (\ref{LimittingDDE}) and equilibrium $\frac{1}{1+aT}$} 
\put(1.0,3.0){for $u_0=1$ with parameters:}
\put(1.0,3.5){}
\put(4.0,2.3){$T=1.8$}
\put(7.0,1.5){$a=1.6$}
\put(7.0,0.5){$a=3.1$}
\end{picture}
\end{minipage}

\begin{minipage}[b]{8cm}
\end{minipage}
\hfill

Finally, we remark some properties of the spectrum.
The spectrum of the DDE is given by inserting $\e^{\lambda t}$ for $\lambda\in\mathbb{C}$ into the equation (see e.g. \cite{Smith}). This yields for given $a,T\geq0$ the equation
\begin{align}\label{CharEquationDDE}
 \lambda  = - a + a \e^{-\lambda T}. 
\end{align}
This transcendental equation (in $\lambda\in\mathbb{C}$) has in general an infinite discrete amount of solutions.

The eigenvalues of $\oAs$ for fixed $N\in\N$ are given by the characteristic equation
\begin{align*}
\phi(\lambda) = -a b^{N-1} + (\lambda+b)^{N-1} (\lambda+a)=0,
\end{align*}
that can be computed easily.
Hence, setting $b=\frac{N}{T}$ we get $\phi(\lambda)=0$ if and only if
\begin{align*}
 \frac{a}{a+\lambda} = \left(\frac{\lambda+b}{b}\right)^{N-1} = \left(1 + \frac{\lambda T}{N}\right)^{N-1}.
\end{align*}
For $N\rightarrow \infty$, right hand side converges to $\e^{\lambda T}$. So, in the limit $\lambda\in\mathbb{C}$ satisfies the equation \begin{align*}
\frac{a}{a+\lambda} = \e^{\lambda T},                                                                                               \end{align*}
i.e. the same equation as (\ref{CharEquationDDE}). Hence, one can say that not only the solution converges but also the spectrum of the MP and of the ME converges to each other. Note, that the convergence of the spectrum is very slow, as the convergence of the exponential function is.

\section{Appendix}

\subsection{Laplace transform}

Here, we summarize some facts of the Laplace transform. More details can be found, e.g. in 
\cite{LaplaceRef1}. 
For a given function $u: [0,\infty)\in t\mapsto u(t)\in\R$ that does not grow faster than an exponential function in time, the Laplace transform is defined by
\begin{eqnarray*}
\hat u (\lambda) = (\mathcal L u)(\lambda) = 
\int_0^\infty e^{-\lambda t} u(t) \d t.
\end{eqnarray*}

We use the following formulas that can be checked easily:
\begin{eqnarray*}
\mathcal L( \dot{u} )(\lambda) &=& \lambda \hat u (\lambda) - u_0 \\
\mathcal L(K \ast u) &=& (\mathcal L K) \cdot (\mathcal L u) \\
\mathcal L(\e^{-a \cdot})(\lambda) &=& { 1 \over \lambda + a} \\
\mathcal {L}\left(\frac{1}{(n-1)!}t^{n-1}\e^{-a t}\right)(\lambda) &=&
\frac{1}{(\lambda+a)^n}.
\end{eqnarray*}
The Laplace transform has an interesting asymptotic behavior.
The limit for large times $u(t)  \stackrel{t \pfk \infty}{\pf}  u_\infty$
can be calculated with the Laplace transform. It holds
$\lambda \hat u (\lambda) \stackrel{\lambda \pfk 0}{\pf}  u_\infty$.
Thus, there is no need to know the whole solution $u(t)$ if one is 
interested only in the equilibrium case.
This is important, since, in general for non-autonomous equations, 
the equilibrium case cannot be calculated by setting $\dot{u}=0$.

Let us note that the uniform convergence on compact sets of $t \in \R_+$
carries over to  uniform convergence on compact sets of $\lambda$ in the
domain of analyticity.

To carry over positivity properties between the original and the
transformation the following lemma is useful:
\begin{lem}\label{reslem}
 Let $K(t)=\sum_{j=1}^N\gamma_j \e^{-\alpha_j t}$ with its Laplace transform $k(\lambda) = \sum_{j=1}^N\gamma_j \frac{1}{\lambda+\alpha_j}$.
 Then $K(t)\geq0$ if and only if $\sum_{j=1}^N \frac{\gamma_j}{(\lambda+\alpha_j)^m} \geq 0$ for any $m\in\N$.
\end{lem}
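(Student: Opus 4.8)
The plan is to connect the two conditions through a single inverse Laplace transform identity and then exploit the positivity of the Laplace kernel in both directions. First I would record the key observation: writing $S_m(\lambda) := \sum_{j=1}^N \frac{\gamma_j}{(\lambda+\alpha_j)^m}$, the formula $\mathcal{L}\big(\frac{t^{m-1}}{(m-1)!}\e^{-\alpha t}\big)(\lambda) = \frac{1}{(\lambda+\alpha)^m}$ from the appendix together with linearity gives
\[
 S_m(\lambda) = \mathcal{L}\!\left( \tfrac{t^{m-1}}{(m-1)!}\,K(t) \right)(\lambda).
\]
Thus $S_m$ is exactly the Laplace transform of $K$ weighted by the nonnegative factor $\frac{t^{m-1}}{(m-1)!}$, and the whole lemma reduces to the elementary relationship between the sign of a function on $[0,\infty)$ and the sign of its Laplace transform on the real axis (in the region of convergence $\lambda > -\min_j \alpha_j$, which is what I take the inequality to mean).

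The forward direction is then immediate. If $K(t) \geq 0$ for all $t \geq 0$, then for each fixed $m$ the integrand $\e^{-\lambda t}\frac{t^{m-1}}{(m-1)!}K(t)$ is nonnegative for real $\lambda$ in the region of convergence, so $S_m(\lambda) = \int_0^\infty \e^{-\lambda t}\frac{t^{m-1}}{(m-1)!}K(t)\,\d t \geq 0$ for every $m \in \N$.

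The reverse direction is the substantial part, and I would argue it by a concentration (approximate identity) argument, essentially the Post--Widder inversion idea. Introduce the normalized weight $\rho_{m,\lambda}(t) := \frac{\lambda^m t^{m-1}}{(m-1)!}\e^{-\lambda t}$, which for $\lambda>0$ is a probability density on $[0,\infty)$ (a Gamma density) with mean $m/\lambda$ and variance $m/\lambda^2$. Multiplying the identity above by $\lambda^m$ gives $\lambda^m S_m(\lambda) = \int_0^\infty \rho_{m,\lambda}(t)\,K(t)\,\d t$. Now fix $t_0 > 0$ and choose $\lambda = m/t_0$, so that $\rho_{m,\lambda}$ has mean exactly $t_0$ and variance $t_0^2/m \to 0$; hence the densities concentrate at $t_0$, and since $K$ is continuous the right-hand side converges to $K(t_0)$ as $m \to \infty$. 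On the other hand $\lambda^m > 0$ and $S_m(\lambda) \geq 0$ by hypothesis, so $K(t_0) = \lim_{m\to\infty}\lambda^m S_m(\lambda) \geq 0$. The value $t_0 = 0$ follows by continuity of $K$.

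The step I expect to be the main obstacle is making this concentration limit rigorous, i.e. justifying $\int_0^\infty \rho_{m,m/t_0}(t)K(t)\,\d t \to K(t_0)$. Since $K$ is a finite sum of exponentials it may grow like $\e^{\beta t}$ with $\beta = -\min_j\alpha_j$, so one must check the tails do not spoil the limit. This is controllable because the exponential moments stay bounded: $\int_0^\infty \rho_{m,\lambda}(t)\e^{\beta t}\,\d t = (1-\beta/\lambda)^{-m} = (1-\beta t_0/m)^{-m} \to \e^{\beta t_0}$ for $\lambda = m/t_0$. Splitting the integral into a neighbourhood $|t-t_0|<\delta$, where continuity of $K$ controls the integrand, and its complement, where Chebyshev's inequality (variance $\to 0$) together with the bounded exponential moment controls the mass, completes the estimate. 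Alternatively one could invoke Bernstein's theorem: the hypothesis says precisely that $(-1)^{m-1}k^{(m-1)}(\lambda)\geq 0$ for all $m$, i.e. $k(\lambda)=\sum_j \gamma_j/(\lambda+\alpha_j)$ is completely monotone, hence the Laplace transform of a nonnegative measure, which by uniqueness must be $K(t)\,\d t$; but the Post--Widder argument above is self-contained and avoids any poles since $\lambda\to\infty$.
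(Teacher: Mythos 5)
Your proposal is correct, and the forward direction is exactly the paper's argument: $\sum_j \gamma_j/(\lambda+\alpha_j)^{m+1}$ is (up to the positive factor $1/m!$) the integral $\int_0^\infty t^m\e^{-\lambda t}K(t)\,\d t$, which is nonnegative when $K$ is. For the reverse direction you also make the same key substitution as the paper ($\lambda = m/t_0$, $m\to\infty$), but you justify the limit differently. The paper exploits the finite-sum structure of $K$: it writes
\[
\lambda^{m} S_{m}(\lambda)\;=\;\sum_{j=1}^N \gamma_j\Bigl(\frac{\lambda}{\lambda+\alpha_j}\Bigr)^{m}
\;=\;\sum_{j=1}^N \gamma_j\Bigl(1+\frac{\alpha_j t_0}{m}\Bigr)^{-m}
\;\xrightarrow{m\to\infty}\;\sum_{j=1}^N\gamma_j\e^{-\alpha_j t_0}=K(t_0),
\]
a termwise limit over $N$ summands requiring no analysis beyond $(1+x/m)^{-m}\to\e^{-x}$. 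You instead treat $\lambda^m S_m(\lambda)$ as the integral of $K$ against a Gamma$(m,\lambda)$ density and run a Post--Widder concentration argument, which forces you to control the tails of a possibly exponentially growing $K$. That route is valid and strictly more general (it would prove the inversion for any continuous $K$ of exponential order with nonnegative generalized moments, and your Bernstein remark points the same way), but for this lemma it is avoidable overhead: the closed-form evaluation of the Gamma integral against each $\e^{-\alpha_j t}$ is precisely $(1+\alpha_j/\lambda)^{-m}$, so the tail estimates disappear. One small caution on your sketch: Chebyshev bounds the \emph{mass} of $\rho_{m,\lambda}$ off a neighbourhood of $t_0$, but since $|K(t)|$ may be unbounded you need to combine it with the exponential-moment bound via Cauchy--Schwarz (or a tilted Chebyshev) to kill $\int_{|t-t_0|\geq\delta}\rho_{m,\lambda}|K|$; as written, "Chebyshev together with the bounded exponential moment" elides that step.
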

\begin{proof}
 Let $K(t)\geq0$. Since $K(0)\geq0$, we get $\sum_{j=1}^N\gamma_j\geq0$, i.e. the claim holds for $m=0$.
 For $m\geq0$, we get $0\leq \int_0^\infty t^m K(t)\e^{-\lambda t}\d t =
 (-1)^m k^{(m)}(\lambda) = \sum_{j=1}^N\frac{\gamma_j}{(\lambda +
   \alpha_j)^{m+1}}$ what proves the claim in one direction. 
 
 For the other direction, we put $\lambda = \frac n t$ and $m+1 = n$. Then
 \begin{align*}
  0\leq \sum_{j=1}^N\frac{\gamma_j(\frac{n}{t})^n}{(\frac{n}{t} + \alpha_j)^n} = \sum_{j=1}^N\frac{\gamma_j}{(1 +
  \frac{\alpha_j n}{t})^n} = \sum_{j=1}^N \gamma_j \left(1 +
  \frac{\alpha_j t}{n}\right)^{-n} \rightarrow \sum_{j=1}^N \gamma_j \e^{-\alpha_j t}, \mathrm{~~as~~}n\rightarrow \infty,
 \end{align*}
 which proves the claim of the lemma.
\end{proof}


\subsection{Simplex integrals}

In Theorem \ref{TheoremMPtoME}, we proved the positivity of the 
kernel $K(t)$ using an integral over a simplex. This is based on the
following observation.

Let $S_{n-1} \subset \R^{n}$ be the simplex, defined as 
\begin{eqnarray*}
S_{n-1} = \{ s \in \R^{n}~|~ s_i \geq 0,~s_1+...+s_n = 1\}.
\end{eqnarray*}
We consider functions $g:\R^n \pf \R$ and their integrals over 
$S_{n-1}$. We have
\begin{align*}
\int\limits_{\rS_{n-1}} &g(s) \d\sigma(s) = 
\frac{1}{\sqrt{n}}
\int\limits_{S_{n-1}} g(s_1,s_2,...,s_{n-1},1{-}s_1{-}\ldots{-}s_{n-1}) 
\d{}s_1 \cdots \d{}s_{n-1}=\\
&=
(n-1)!\int\limits_0^1  \d{}s_1 \!\!
\int\limits_0^{1{-}s_1}\!\! \d{}s_{2} \!\!\!\!\!\!
\int\limits_0^{1{-}s_1{-}s_{2} }\!\!\!\!\!\! \d{}s_{3}~~ \cdots 
\int\limits_0^{1{-}s_n{-} \ldots {-}s_{n-2}} \!\!\!\!\!\!\!\!\!\d{}s_{n-1} ~
g(s_1,s_2,...,s_n)\Big|_{s_{n}=1{-}s_1 {-} \ldots {-}s_{n-1}}~,
\end{align*}
where $\sigma(\d{}s)$ is the Lebesgue measure on $S_{n-1}$ and $\sqrt{n}$
is the volume of $S_{n-1}$. 

Let $f:\R \pf \R$ be a smooth enough function, $f^{(k)}$ its $k$-
derivative and $x_1,...,x_n$  be given different real values.
Set $g(s) = f(\la x, s\ra)$,
where  $\la x, s\ra = x_1s_1+ x_2s_2 + \ldots + x_n s_n $ is the scalar product
in $\R^n$.

Now, using induction one can prove that
\begin{eqnarray*}
\sum_{i=1}^n f(x_i) \prod_{j\not=i}^n {1 \over x_i - x_j }
=
\int\limits_{S_{n-1}} f^{(n-1)}(\la x, s\ra)
\sigma(\d{}s)~.
\end{eqnarray*}

This formula gives a powerful tool to switch between expressions
connected with Lagrange polynomials and expressions
connected with
simplex integrals. In Theorem \ref{TheoremMPtoME}, we used this 
formula with $f(x) = \e^{- x t}$.

\subsection{Lagrange polynomials}

Here we summarize basic facts from the theory of Lagrange polynomials.
Let 
\begin{eqnarray*}
L_i^j (x) = \prod_{k=1, k\neq i}^j \frac{x - x_k }{x_i - x_k}~,
\end{eqnarray*}
assuming $x_i \not= x_k$ for $i \not= k $.
Obviously $L_i^j (x)$  is a polynomial of degree $j-1$ and
we have $L_i^j (x_k) = \delta_{ik}$ with $\delta_{ik}$
the Kronecker symbol. Hence, the polynomial
\begin{eqnarray*}
P(x) = \sum_{i=1}^j p_i L_i^j (x) 
\end{eqnarray*}
 of degree $j-1$ 
satisfy $P(x_i)=p_i$.

Now, let us fix $z\in\R$. 
Seeking for a polynomial $P(x)=q_0+q_1 x + ... + q_{j-1}x^{j-1}$ with the
condition $P(x_i) = p_i = {x_i \over z + x_i}$, 
we get coefficients
$q_i$ with $q_0 = \prod_{i=1}^j \frac{x_i}{z +x_i}$ among them. Hence,
we have on the one hand
\begin{eqnarray*}
P(0) = q_0 =  \prod_{i=1}^j \frac{x_i}{z +x_i}
\end{eqnarray*}
and on the other hand
\begin{eqnarray*}
P(0) = \sum_{i=1}^j p_i L_i^j (0) = 
\sum_{i=1}^j {x_i \over z + x_i}  
\prod_{k=1, k\neq i}^j \frac{(- x_k) }{x_i - x_k}
=
\sum_{i=1}^j {x_i \over z + x_i} 
\prod_{k=1, k\neq i}^j \frac{x_k}{x_k - x_i}~.
\end{eqnarray*}
It follows
\begin{eqnarray*}
\prod_{i=1}^j \frac{x_i}{z +x_i}
=
\sum_{i=1}^j {x_i \over z + x_i} 
\prod_{k=1, k\neq i}^j \frac{x_k}{x_k - x_i}~.
\end{eqnarray*}
Note, in our explanation we use $\psi_i^j = (-1)^{j-1} L_i^j(0)$ and put $z=\lambda$.

\newpage

\end{document}